\newtheorem{thm}{Theorem}
\newtheorem{lem}{Lemma}
\newtheorem{prop}{Proposition}
\newtheorem{cor}{Corollary}
\newtheorem{defn}{Definition} 
\newtheorem{rem}{Remark}
\begin{document}

\title{On two-sided monogenic functions of axial type\thanks{accepted for publication in Moscow Mathematical Journal}}

\author{Dixan Pe\~na Pe\~na$^{\text{a}}$\\
\small{e-mail: dixanpena@gmail.com}
\and Irene Sabadini$^{\text{a}}$\\
\small{e-mail: irene.sabadini@polimi.it}
\and Frank Sommen$^{\text{b}}$\\
\small{e-mail: fs@cage.ugent.be}}

\date{\small{$^\text{a}$Dipartimento di Matematica, Politecnico di Milano\\Via E. Bonardi 9, 20133 Milano, Italy\\\vspace{0.2cm}
$^{\text{b}}$Clifford Research Group, Department of Mathematical Analysis\\Faculty of Engineering and Architecture, Ghent University\\Galglaan 2, 9000 Gent, Belgium}}

\maketitle

\begin{abstract}
\noindent In this paper we study two-sided (left and right) axially symmetric solutions of a generalized Cauchy-Riemann operator. We present three methods to obtain special solutions: via the Cauchy-Kowalevski extension theorem, via plane wave integrals and Funk-Hecke's formula and via primitivation. Each of these methods is effective enough to generate all the polynomial solutions.\vspace{0.2cm}\\
\noindent\textit{Keywords}: Two-sided monogenic functions; plane waves; Vekua systems; Funk-Hecke's formula.\vspace{0.1cm}\\
\textit{Mathematics Subject Classification}: 30G35, 33C10, 44A12.
\end{abstract}

\section{Introduction}

Let  $\mathbb{R}_{0,m}$ be the real Clifford algebra generated by the canonical basis $\{e_1,\ldots,e_m\}$ of the Euclidean space $\mathbb R^m$ (see \cite{Cl,Lo}). It is an associative algebra in which the multiplication has the property $\underline x^2=-\vert\underline x\vert^2=-\sum_{j=1}^mx_j^2$ for any $\underline x=\sum_{j=1}^mx_je_j\in\mathbb R^m$. This requirement clearly implies the following multiplication rules
\[e_je_k+e_ke_j=-2\delta_{jk},\quad j,k\in\{1,\dots,m\}.\]
Any Clifford number $a\in\mathbb R_{0,m}$ may thus be written as 
\[a=\sum_Aa_Ae_A,\quad a_A\in\mathbb R,\] 
using the basis elements $e_A=e_{j_1}\dots e_{j_k}$ defined for every subset $A=\{j_1,\dots,j_k\}$ of $\{1,\dots,m\}$ with $j_1<\dots<j_k$ (for $A=\emptyset$ one puts $e_{\emptyset}=1$). Conjugation in $\mathbb R_{0,m}$ is given by $\overline a=\sum_Aa_A\overline e_A$, where $\overline e_A=\overline e_{j_k}\dots\overline e_{j_1}$, $\overline e_j=-e_j$, $j=1,\dots,m$. It is easy to check that 
\begin{equation}\label{revconj}
\overline{ab}=\overline b\overline a,\quad a,b\in\mathbb R_{0,m}.
\end{equation}
For each $\ell\in\{0,1,\dots,m\}$ we call
\[\mathbb R_{0,m}^{(\ell)}=\text{span}_{\mathbb R}\big(e_A:\;\vert A\vert=\ell\big)\] 
the subspace of $\ell$-vectors, i.e. the subspace spanned by the products of $\ell$ different basis vectors. Thus, every element $a\in\mathbb R_{0,m}$ admits the so-called multivector decomposition 
\[a=\sum_{\ell=0}^m[a]_{\ell},\] 
where $[a]_{\ell}$ denotes the projection of $a$ on $\mathbb R_{0,m}^{(\ell)}$. 

Observe that the product of two Clifford vectors $\underline x=\sum_{j=1}^mx_je_j$ and $\underline y=\sum_{j=1}^my_je_j$ splits into a scalar part and a 2-vector part
\begin{equation*}
\underline x\,\underline y=\underline x\bullet\underline y+\underline x\wedge\underline y\in\mathbb R_{0,m}^{(0)}\oplus\mathbb R_{0,m}^{(2)},
\end{equation*}
where
\[\underline x\bullet\underline y=-\left\langle\underline x,\underline y\right\rangle=-\sum_{j=1}^mx_jy_j\]
equals, up to a minus sign, the standard Euclidean inner product between $\underline x$ and $\underline y$, while
\[\underline x\wedge\underline y=\sum_{j=1}^m\sum_{k=j+1}^me_je_k(x_jy_k-x_ky_j)\] 
represents the standard outer (or wedge) product between them.

One natural way to extend the theory of holomorphic functions of a complex variable to higher dimensions is to consider the null solutions of the so-called generalized Cauchy-Riemann operator in $\mathbb R^{m+1}$, given by
\[\partial_{x_0}+\partial_{\underline x},\]
where $\partial_{\underline x}=\sum_{j=1}^me_j\partial_{x_j}$ is the Dirac operator in $\mathbb R^m$ (see \cite{BDS,CS4,DSS,GM,GuSp}).

\begin{defn}
A function $f:\Omega\rightarrow\mathbb{R}_{0,m}$ defined and continuously differentiable in an open set $\Omega$ in $\mathbb R^{m+1}$ is said to be left {\rm(}resp. right{\rm)} monogenic in $\Omega$ if $(\partial_{x_0}+\partial_{\underline x})f=0$ {\rm(}resp. $f(\partial_{x_0}+\partial_{\underline x})=0${\rm)} in $\Omega$. Moreover, functions which are both left and right monogenic, i.e. functions satisfying the overdetermined system 
\begin{equation}\label{TSidedEq}
(\partial_{x_0}+\partial_{\underline x})f=f(\partial_{x_0}+\partial_{\underline x})=0,
\end{equation}
are called two-sided monogenic.
\end{defn}

In a similar fashion is defined monogenicity with respect to the Dirac operator $\partial_{\underline x}$. Note that the differential operator $\partial_{x_0}+\partial_{\underline x}$ provides a factorization of the Laplacian in the sense that
\[\Delta=\sum_{j=0}^m\partial_{x_j}^2=(\partial_{x_0}+\partial_{\underline x})(\partial_{x_0}-\partial_{\underline x})\]
and hence monogenic functions are harmonic. 

One basic yet fundamental result in Clifford analysis is the Cauchy-Kowalevski extension theorem, which states that every monogenic function in $\mathbb R^{m+1}$ is determined by its restriction to $\mathbb R^m$ (see \cite{So1}).

\begin{thm}[Cauchy-Kowalevski extension theorem]\label{CKextThm}
Every function $g(\underline x)$ analytic in the open set $\,\underline\Omega\subset\mathbb R^m$ has a unique left monogenic extension given by
\begin{equation*}\label{CKf}
\mathsf{CK}[g(\underline x)](x_0,\underline x)=\sum_{n=0}^\infty\frac{(-x_0)^n}{n!}\,\partial_{\underline x}^ng(\underline x),
\end{equation*}
and defined in an open neighbourhood $\Omega\subset\mathbb R^{m+1}$ of $\,\underline\Omega$.
\end{thm}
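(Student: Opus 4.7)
The plan is to proceed in three steps: derive the series heuristically from the monogenicity equation, check that the candidate series converges in a neighbourhood of $\underline\Omega$ and defines a monogenic function there, and finally establish uniqueness. The guiding idea is simply that the equation $(\partial_{x_0}+\partial_{\underline x})f=0$ lets us rewrite $\partial_{x_0}$ on solutions as $-\partial_{\underline x}$, so the $x_0$-Taylor expansion of any left monogenic extension is forced.

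First I would argue formally: if $f$ is left monogenic with $f(0,\underline x)=g(\underline x)$, induction on $n$ gives $\partial_{x_0}^n f=(-\partial_{\underline x})^n f$, and expanding $f$ as a Taylor series in $x_0$ around $0$ yields exactly $\sum_{n\ge0}\frac{(-x_0)^n}{n!}\partial_{\underline x}^n g(\underline x)$. Next I would verify directly that the right-hand side is left monogenic wherever it converges: applying $\partial_{x_0}+\partial_{\underline x}$ term by term, the $x_0$-derivative of the $n$-th term produces $\frac{(-x_0)^{n-1}}{(n-1)!}(-\partial_{\underline x}^n g)$, which is cancelled by the $\partial_{\underline x}$-contribution of the $(n-1)$-st term after reindexing.

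The main obstacle is convergence, and here the analyticity of $g$ does the work. On a compact $K\subset\underline\Omega$ one has Cauchy-type estimates $\bigl|\partial^{\alpha}g(\underline x)\bigr|\le C\,M^{|\alpha|}|\alpha|!$ for some constants $C,M>0$. Expanding $\partial_{\underline x}^n=\bigl(\sum_{j=1}^m e_j\partial_{x_j}\bigr)^n$ produces $m^n$ Clifford-valued terms, each of which is bounded in any fixed algebra norm by a constant multiple of $M^n n!$. Hence $\bigl|\partial_{\underline x}^n g(\underline x)\bigr|\lesssim (mM)^n n!$ on $K$, so
\[
\left|\frac{(-x_0)^n}{n!}\partial_{\underline x}^n g(\underline x)\right|\lesssim \bigl(mM|x_0|\bigr)^n,
\]
and the series converges normally on $K\times\{|x_0|<(mM)^{-1}\}$. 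Taking the union of such products over an exhaustion of $\underline\Omega$ produces the required open neighbourhood $\Omega$, and the same estimates justify term-by-term differentiation.

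Uniqueness follows immediately from the preceding formal computation: if $f_1,f_2$ are two left monogenic extensions of $g$, their difference $h$ is left monogenic and vanishes on $\{x_0=0\}$, so $\partial_{x_0}^n h(0,\underline x)=(-\partial_{\underline x})^n h(0,\underline x)=0$ for every $n$, forcing $h\equiv 0$ on the connected component of $\Omega$ containing $\underline\Omega$ by real-analyticity of monogenic functions. I do not expect any step beyond the convergence estimate to be delicate; once the Cauchy bounds and the factor $m^n$ from expanding $\partial_{\underline x}^n$ are in hand, the rest is bookkeeping.
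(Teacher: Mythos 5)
The paper does not prove this theorem: it is stated as a classical result with a reference to Sommen \cite{So1}, so there is no in-paper proof to compare against. Your argument is correct and is the standard one: the relation $\partial_{x_0}f=-\partial_{\underline x}f$ on solutions forces the $x_0$-Taylor coefficients of any left monogenic extension; the Cauchy estimates for the real-analytic $g$ give normal convergence of $\sum_{n\ge0}\frac{(-x_0)^n}{n!}\partial_{\underline x}^ng$ on a neighbourhood of $\underline\Omega\times\{0\}$ and justify termwise differentiation; the telescoping computation shows the sum is annihilated by $\partial_{x_0}+\partial_{\underline x}$; and uniqueness follows from real-analyticity of monogenic functions (they are harmonic, as noted in the paper's introduction). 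One minor inefficiency: your bound $\lvert\partial_{\underline x}^ng\rvert\lesssim(mM)^n n!$ comes from counting all $m^n$ ordered tuples and using the crude estimate $\lvert\partial^\alpha g\rvert\le CM^{n}n!$; grouping by multi-index and using the sharper Cauchy bound $\lvert\partial^\alpha g\rvert\le CM^{\lvert\alpha\rvert}\alpha!$ gives $\lvert\partial_{\underline x}^ng\rvert\le C\binom{n+m-1}{m-1}M^n n!$, hence a strip of width essentially $1/M$ rather than $1/(mM)$ --- though a positive width is all that the statement requires.
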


This result leads to the construction of special monogenic functions depending on the choice of the initial function $g(\underline x)$. For instance, if $g$ is a function of the variable $\langle\underline x,\underline t\rangle$ with $\underline t\in\mathbb R^m$ fixed, then $\mathsf{CK}[g(\langle\underline x,\underline t\rangle)]$ will produce a so-called monogenic plane wave function (see \cite{So2,So3}). 

Let us denote by $\mathsf{M}_{l}(k)$ (resp. $\mathsf{M}_{r}(k)$) the set of all left (resp. right) monogenic homogeneous polynomials of degree $k$ in $\mathbb R^m$. Another class of special monogenic functions we shall deal in this paper is the class of axial left monogenic functions (see \cite{LB,S1,S2,S3}). They are left monogenic functions of the form 
\begin{equation}\label{AxialLMF}
\left(M(x_0,r)+\frac{\underline x}{r}\,N(x_0,r)\right)P_k(\underline x),\quad r=\vert\underline x\vert,
\end{equation}
where $M$, $N$ are $\mathbb R$-valued continuously differentiable functions depending on the two variables $(x_0,r)$ and $P_k(\underline x)$ belongs to $\mathsf{M}_{l}(k)$. It can be easily shown that $M$ and $N$ must satisfy the following Vekua-type system (see \cite{Ve})  
\begin{equation}\label{VeEq}
\left\{\begin{aligned}
\partial_{x_0}M-\partial_rN&=\frac{2k+m-1}{r}N\\
\partial_rM+\partial_{x_0}N&=0.
\end{aligned}\right.
\end{equation}
One may prove that every left monogenic homogeneous polynomial $M_k(x_0,\underline x)$ of degree $k$ in $\mathbb R^{m+1}$ can be expressed as a finite sum of axial left monogenic functions, i.e.
\begin{equation}\label{maindecomp}
M_k(x_0,\underline x)=\sum_{n=0}^k\mathsf{CK}\left[\underline x^nP_{k-n}(\underline x)\right](x_0,\underline x),\quad P_{k-n}(\underline x)\in\mathsf{M}_{l}(k-n),
\end{equation}
and thus showing that the axial left monogenic functions are in fact the building blocks of the solutions of the equation $(\partial_{x_0}+\partial_{\underline x})f=0$.

The analogues of functions (\ref{AxialLMF}) for the case of two-sided monogenicity were introduced in \cite{DSo} and are defined as follows.

\begin{defn}\label{a2sidedm}
Let $P_{k,\ell}(\underline x)$ be an $\mathbb R_{0,m}^{(\ell)}$-valued polynomial belonging to $\mathsf{M}_{l}(k)$ $(1\le\ell\le m-1)$. A function is called axial two-sided monogenic if it is two-sided monogenic and is of the form  
\begin{equation}\label{AxialTSMF}
A(x_0,r)P_{k,\ell}(\underline x)+B(x_0,r)\underline xP_{k,\ell}(\underline x)+C(x_0,r)P_{k,\ell}(\underline x)\underline x+D(x_0,r)\underline x P_{k,\ell}(\underline x)\underline x,
\end{equation}
where $r=\vert\underline x\vert$ and $A$, $B$, $C$, $D$ are $\mathbb R$-valued continuously differentiable functions in some open subset of $\,\mathbb R^2_+=\{(x_1,x_2)\in\mathbb R^2:\;x_2>0\}$.   
\end{defn}

In order to allow for explicit computations we assume that $P_{k,\ell}$ takes values in the subspace of $\ell$-vectors. Note that this assumption implies that $P_{k,\ell}$ is two-sided monogenic.  Indeed, from $\partial_{\underline x}P_{k,\ell}=0$ and using (\ref{revconj}) we obtain 
\[0=\overline{P_{k,\ell}}\partial_{\underline x}=(-1)^{\frac{\ell(\ell+1)}{2}}P_{k,\ell}\partial_{\underline x}.\]
It thus follows that $P_{k,\ell}\partial_{\underline x}=0$. The consideration of functions (\ref{AxialTSMF}) leads to a system of first-order partial differential equations with variable coefficients (see \cite{DSo}).

\begin{prop}\label{caract1}
A function is axial two-sided monogenic if and only if $C=B$ and 
\begin{equation}\label{Veq2sided}
\left\{\begin{aligned}
\partial_{x_0}A-r\partial_rB&=\left(2k+m-\mu_{\ell}\right)B\\
\partial_{x_0}B+\frac{1}{r}\partial_rA&=\mu_{\ell}D\\
\partial_{x_0}B-r\partial_rD&=(2k+m+2)D\\
\partial_{x_0}D+\frac{1}{r}\partial_rB&=0,
\end{aligned}\right.
\end{equation} 
where $\mu_{\ell}=(-1)^{\ell}(2\ell-m)$.
\end{prop}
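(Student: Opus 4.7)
The plan is to apply $\partial_{x_0}+\partial_{\underline x}$ to the ansatz (\ref{AxialTSMF}) once from the left and once from the right, and then to read off scalar PDEs for $A,B,C,D$ by equating to zero the coefficients of the four ``axial basis'' functions $P_{k,\ell}$, $\underline xP_{k,\ell}$, $P_{k,\ell}\underline x$ and $\underline xP_{k,\ell}\underline x$ in each resulting expression.

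The technical ingredients are standard Leibniz-type identities in the Clifford setting: for a scalar function $f(x_0,r)$ one has $(\partial_{x_0}+\partial_{\underline x})f=\partial_{x_0}f+\frac{\underline x}{r}\partial_rf$; Clifford vectors anticommute up to the scalar correction $e_j\underline x+\underline xe_j=-2x_j$; and the two-sided monogenicity $\partial_{\underline x}P_{k,\ell}=P_{k,\ell}\partial_{\underline x}=0$ together with Euler's identity $\sum_jx_j\partial_{x_j}P_{k,\ell}=kP_{k,\ell}$ dispose of the intrinsic derivatives of $P_{k,\ell}$. The crucial algebraic input is the $\ell$-vector identity
\[\sum_{j=1}^me_jP_{k,\ell}e_j=(-1)^{\ell}(2\ell-m)P_{k,\ell}=\mu_{\ell}P_{k,\ell},\]
which is verified by splitting the sum over $j$ according to whether $e_j$ occurs in a given basis $\ell$-vector. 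Combining these building blocks with the Leibniz rule yields
\[\partial_{\underline x}(\underline xP_{k,\ell})=-(2k+m)P_{k,\ell},\qquad\partial_{\underline x}(P_{k,\ell}\underline x)=\mu_{\ell}P_{k,\ell},\]
\[\partial_{\underline x}(\underline xP_{k,\ell}\underline x)=-(2k+m+2)P_{k,\ell}\underline x-\mu_{\ell}\underline xP_{k,\ell},\]
together with the right-action analogues obtained by reflecting each computation.

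Imposing $(\partial_{x_0}+\partial_{\underline x})F=0$ produces four scalar equations (one per axial basis element), and imposing $F(\partial_{x_0}+\partial_{\underline x})=0$ produces four more. Subtracting the $\underline xP_{k,\ell}\underline x$-equation on the right from that on the left gives $\partial_r(B-C)=0$; subtracting the right-hand $P_{k,\ell}\underline x$-equation from the left-hand $\underline xP_{k,\ell}$-equation gives $\partial_{x_0}(B-C)=0$; hence $B-C$ is locally constant. Subtracting the two $P_{k,\ell}$-equations yields $r\partial_r(C-B)+(2k+m+\mu_{\ell})(C-B)=0$, and combining this with $\partial_r(B-C)=0$ forces $(2k+m+\mu_{\ell})(C-B)=0$. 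Since $2k+m+\mu_{\ell}$ equals $2k+2\ell$ for $\ell$ even and $2(k+m-\ell)$ for $\ell$ odd, and both are strictly positive whenever $1\le\ell\le m-1$, we conclude $C=B$. Substituting $C=B$ back into the system of eight equations collapses them to exactly the Vekua-type system (\ref{Veq2sided}), and the converse direction is then an immediate verification.

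The main obstacle is the correct identification and sign-tracking of the $\ell$-vector identity $\sum_je_jP_{k,\ell}e_j=\mu_{\ell}P_{k,\ell}$: this is precisely what causes the parameter $\mu_{\ell}$ to appear in the final system, and it is the only place where the $\ell$-vector hypothesis on $P_{k,\ell}$ enters in an essential way. Once that identity and the Dirac-action formulas above are secured, the remainder is careful but mechanical bookkeeping.
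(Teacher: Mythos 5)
Your argument is correct and it matches the computational machinery the paper itself sets up: the paper does not supply a proof of Proposition~\ref{caract1} (it is quoted from \cite{DSo}), but the identities you use are precisely the formulas the paper records as (\ref{ident1})--(\ref{ident3-4}), together with their right-acting mirror images. Applying $\partial_{x_0}+\partial_{\underline x}$ to both sides of the ansatz, collecting coefficients, and reducing to a scalar system is exactly the intended route; your Dirac-action formulas
\[\partial_{\underline x}(\underline xP_{k,\ell})=-(2k+m)P_{k,\ell},\qquad\partial_{\underline x}(P_{k,\ell}\underline x)=\mu_{\ell}P_{k,\ell},\qquad\partial_{\underline x}(\underline xP_{k,\ell}\underline x)=-(2k+m+2)P_{k,\ell}\underline x-\mu_{\ell}\underline xP_{k,\ell}\]
all check out, as do the left/right subtractions yielding $\partial_r(B-C)=0$, $\partial_{x_0}(B-C)=0$, and $r\partial_r(C-B)+(2k+m+\mu_{\ell})(C-B)=0$. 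The clean observation that $2k+m+\mu_{\ell}$ equals $2k+2\ell$ or $2(k+m-\ell)$ depending on the parity of $\ell$, hence is positive for $1\le\ell\le m-1$, is a nice way to conclude $C=B$ without splitting into grades. Substituting $C=B$ then makes the left and right systems coincide and reproduces~(\ref{Veq2sided}) exactly.

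One point you pass over quietly is the legitimacy of ``equating coefficients.'' To pass from a vanishing Clifford-algebra-valued expression to four scalar PDEs, you need the four axial forms $P_{k,\ell}$, $\underline xP_{k,\ell}$, $P_{k,\ell}\underline x$, $\underline xP_{k,\ell}\underline x$ to be pointwise linearly independent for generic $\underline x$ (equivalently, for nonzero $P_{k,\ell}$). This is true and can be seen by decomposing into multivector grades: $\underline xP_{k,\ell}\underline x$ alone contributes to grades $\ell\pm2$ (giving $a_4=0$), then grade $\ell$ gives $a_1=0$, and the $(\ell+1)$- and $(\ell-1)$-grade components of $\underline xP_{k,\ell}$ and $P_{k,\ell}\underline x$ differ by the sign $(-1)^{\ell}$ versus $(-1)^{\ell-1}$, so the remaining two equations force $a_2=a_3=0$. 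It would strengthen the write-up to state this explicitly, especially since the ``only if'' half of the proposition hinges on it, but it is the kind of routine verification the paper itself suppresses, and the rest of your reasoning is sound.
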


In this paper we study axial two-sided monogenic functions in a neighbourhood of the origin. Each such function admits a Taylor series decomposition in terms of two-sided monogenic polynomials that are of axial type. 

In Section \ref{secc2} we give a characterization of such two-sided monogenic polynomials in terms of the Cauchy-Kowalevski extension theorem. In particular we characterize those polynomials for which the CK-extension will be axial two-sided monogenic and prove that this class of polynomials spans the space of all polynomials two-sided monogenics.   

In Section \ref{secc3} we consider two-sided monogenic plane waves. They depend on a parameter $\underline t\in S^{m-1}$ and after integrating over the unit sphere $S^{m-1}$ and applying Funk-Hecke's formula one obtains axial two-sided monogenics. We show that all polynomial axial two-sided monogenics may be obtained as integrals of such plane waves. We also construct axial two-sided monogenics that are expressed in terms of Bessel functions.

In the final Section \ref{secc4} we start from the simple observation that if 
\[f(x_0,\underline x)=\left(M(x_0,r)+\displaystyle{\frac{\underline x}{r}}\,N(x_0,r)\right)P_{k,\ell}(\underline x)\] 
is axial left monogenic, then $f(x_0,\underline x)(\partial_{x_0}-\partial_{\underline x})$ is axial two-sided monogenic. We prove that all axial two-sided monogenics may locally be obtained in this way.

So we have several methods to obtain polynomials solutions. Of course one can also consider axial two-sided monogenics in more general domains with possible singularities on the axis or in the origin. It remains to be studied how such solutions might be obtained from the methods exposed here.
 
A method for obtaining polynomial solutions to the Hodge-de Rham system was obtained in \cite{DLaS}. Although the Hodge-de Rham system can be seen as a two-sided monogenic system with respect to the Dirac operator $\partial_{\underline x}$, the authors do not use Vekua systems (see \cite{DSo}), Bessel functions and plane wave integrals.   

\section{Homogeneous two-sided monogenic polynomials in $\mathbb R^{m+1}$}\label{secc2}

The aim of this section is to prove an analogue of the decomposition (\ref{maindecomp}) for the case of two-sided monogenic homogeneous polynomials in $\mathbb R^{m+1}$. 

We begin by observing that
\[e_je_Ae_j=\left\{\begin{array}{ll}(-1)^{\vert A\vert}e_A&\text{for}\quad j\in A,\\(-1)^{\vert A\vert+1}e_A&\text{for}\quad j\notin A,\end{array}\right.\]
which clearly yields $\sum_{j=1}^me_je_Ae_j=(-1)^{\vert A\vert}(2\vert A\vert-m)e_A$. Therefore for every $a\in\mathbb R_{0,m}^{(\ell)}$ the following equality holds
\begin{equation}\label{trickyeq}
\sum_{j=1}^me_jae_j=\mu_{\ell}a,\quad\mu_{\ell}=(-1)^{\ell}(2\ell-m).
\end{equation}
The fact that polynomial $P_{k,\ell}(\underline x)$ in Definition \ref{a2sidedm} is two-sided monogenic remains valid for every left monogenic function $F(\underline x)$ with values in $\mathbb R_{0,m}^{(\ell)}$. We can say even more: $F(\underline x)$ is two-sided monogenic if and only if $[F(\underline x)]_{\ell}$ is left monogenic for $\ell=0,\dots,m$ (see e.g. \cite{ABoDS}). For the sake of completeness we include a proof here.

\begin{prop}\label{deco2sided}
Consider the multivector decomposition of function $F(\underline x)$, i.e. 
\[F=\sum_{\ell=0}^m[F]_{\ell}.\]
Then $F$ is two-sided monogenic if and only if each $[F]_{\ell}$ is left monogenic.
\end{prop}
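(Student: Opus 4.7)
I would first reduce the proposition to the observation that for any $\mathbb{R}_{0,m}^{(\ell)}$-valued smooth function $G$, left monogenicity and right monogenicity with respect to $\partial_{\underline x}$ are equivalent, and then combine the two conditions $\partial_{\underline x}F=0$ and $F\partial_{\underline x}=0$ grade by grade.

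The starting point is that multiplying an $\ell$-vector by a basis vector $e_j$ on the left or on the right yields components only in grades $\ell-1$ and $\ell+1$, and the anticommutation relations $e_je_k+e_ke_j=-2\delta_{jk}$ dictate the sign relating $e_ja$ to $ae_j$ in each grade. A computation in the same spirit as (\ref{trickyeq}) produces, for an $\mathbb{R}_{0,m}^{(\ell)}$-valued $G$, the identities
\begin{equation*}
[\partial_{\underline x}G]_{\ell-1} = -(-1)^\ell [G\partial_{\underline x}]_{\ell-1}, \qquad [\partial_{\underline x}G]_{\ell+1} = (-1)^\ell [G\partial_{\underline x}]_{\ell+1}.
\end{equation*}
In particular $\partial_{\underline x}G=0$ if and only if $G\partial_{\underline x}=0$. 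This immediately yields the ``if'' direction of the proposition: if each $[F]_\ell$ is left monogenic then each is also right monogenic, so $F=\sum_\ell [F]_\ell$ is two-sided monogenic.

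For the ``only if'' direction, I would expand the assumptions $\partial_{\underline x}F=\sum_\ell \partial_{\underline x}[F]_\ell=0$ and $F\partial_{\underline x}=\sum_\ell [F]_\ell\partial_{\underline x}=0$ into their grade-$k$ components. Only $[F]_{k-1}$ (through its grade-raising contribution) and $[F]_{k+1}$ (through its grade-lowering contribution) feed into grade $k$. The pair of equations produced from the two sides, combined with the sign identities above, forms a $2\times 2$ linear system in these two unknowns whose only solution is the zero vector. Hence $[\partial_{\underline x}[F]_{k-1}]_k = 0 = [\partial_{\underline x}[F]_{k+1}]_k$ for every $k$, which shows that each $\partial_{\underline x}[F]_\ell$ vanishes in both neighbouring grades, i.e.\ every $[F]_\ell$ is left monogenic.

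The main obstacle is just careful sign bookkeeping for the left versus right Dirac action on multivector-valued functions; once the displayed identities are correctly established, the rest of the argument is elementary linear algebra applied grade by grade.
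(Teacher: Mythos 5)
Your proposal is correct and follows the same route as the paper: both establish the grade-wise sign identities relating $\partial_{\underline x}G$ to $G\partial_{\underline x}$ for an $\ell$-vector-valued $G$, then read off the grade-$k$ components of $\partial_{\underline x}F=0$ and $F\partial_{\underline x}=0$ as a $2\times 2$ system forcing $[\partial_{\underline x}[F]_{k\mp 1}]_k=0$. The only cosmetic difference is that the paper handles the "if" direction via the conjugation identity $\overline{ab}=\overline b\,\overline a$, whereas you deduce it from the same sign identities used for necessity, which is equally valid.
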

\begin{proof}
We have already seen that the condition is sufficient so we have to prove only the necessity. Put $F_{\ell}=[F]_{\ell}$. Observe that  $\partial_{\underline x}F_{\ell}$ decomposes into a $(\ell-1)$-vector and a $(\ell+1)$-vector, i.e.
\[\partial_{\underline x}F_{\ell}=\left[\partial_{\underline x}F_{\ell}\right]_{\ell-1}+\left[\partial_{\underline x}F_{\ell}\right]_{\ell+1}.\] 
Hence $F$ satisfies $\partial_{\underline x}F=0$ if and only if 
\begin{equation}\label{condizq}
\left[\partial_{\underline x}F_{\ell-1}\right]_{\ell}+\left[\partial_{\underline x}F_{\ell+1}\right]_{\ell}=0,\quad\ell=0,\dots,m,
\end{equation}
with $F_{-1}=F_{m+1}=0$. Similarly, $F$ is right monogenic if and only if 
\[\left[F_{\ell-1}\partial_{\underline x}\right]_{\ell}+\left[F_{\ell+1}\partial_{\underline x}\right]_{\ell}=0,\quad\ell=0,\dots,m,\]
or equivalently
\begin{equation}\label{condder}
\left[\partial_{\underline x}F_{\ell-1}\right]_{\ell}-\left[\partial_{\underline x}F_{\ell+1}\right]_{\ell}=0,\quad\ell=0,\dots,m,
\end{equation}
where we have used the identities 
\[\left[F_{\ell-1}\partial_{\underline x}\right]_{\ell}=(-1)^{\ell-1}\left[\partial_{\underline x}F_{\ell-1}\right]_{\ell},\quad\left[F_{\ell+1}\partial_{\underline x}\right]_{\ell}=(-1)^{\ell}\left[\partial_{\underline x}F_{\ell+1}\right]_{\ell}.\]
It follows from (\ref{condizq}) and (\ref{condder}) that $\left[\partial_{\underline x}F_{\ell-1}\right]_{\ell}=\left[\partial_{\underline x}F_{\ell+1}\right]_{\ell}=0$. This clearly ensures that each $F_{\ell}$ is left monogenic.
\end{proof}

\begin{rem}
The scalar part $[F]_{0}$ and the pseudoscalar part $[F]_{m}$ of a two-sided monogenic function defined in an open connected subset of $\,\mathbb R^m$ are constants.
\end{rem}

In what follows, we recall some essential identities. Let $A$, $B$, $C$, $D$ and $P_{k,\ell}$ be as in Definition \ref{a2sidedm}. It is easily seen that 
\[\partial_{\underline x}A=\sum_{j=1}^me_j\partial_{x_j}A=\sum_{j=1}^me_j(\partial_rA)(\partial_{x_j}r)=\frac{\partial_rA}{r}\,\underline x\]
and therefore
\begin{equation}\label{ident1}
\partial_{\underline x}\big(AP_{k,\ell}\big)=(\partial_{\underline x}A)P_{k,\ell}+A\partial_{\underline x}P_{k,\ell}=\frac{\partial_rA}{r}\underline xP_{k,\ell}.
\end{equation}
Using the identity $\partial_{\underline x}(\underline xf)=-mf-2\sum_{j=1}^mx_j\partial_{x_j}f-\underline x\partial_{\underline x}f$ and Euler's theorem for homogeneous functions, we also obtain that
\begin{multline}\label{ident2}
\partial_{\underline x}\big(B\underline xP_{k,\ell}\big)=(\partial_rB)\frac{\underline x^2}{r}P_{k,\ell}-B\Big(mP_{k,\ell}+2\sum_{j=1}^mx_j\partial_{x_j}P_{k,\ell}+\underline x\partial_{\underline x}P_{k,\ell}\Big)\\
=-\big((2k+m)B+r\partial_rB\big)P_{k,\ell}. 
\end{multline}
On account of (\ref{trickyeq}) we get
\[\partial_{\underline x}\big(P_{k,\ell}\underline x\big)=\left(\partial_{\underline x}P_{k,\ell}\right)\underline x+\sum_{j=1}^me_jP_{k,\ell}(\partial_{x_j}\underline x)=\mu_{\ell}P_{k,\ell}.\]
This gives
\begin{align}
\partial_{\underline x}\big(CP_{k,\ell}\underline x\big)&=\mu_{\ell}CP_{k,\ell}+\frac{\partial_rC}{r}\underline xP_{k,\ell}\underline x,\\
\partial_{\underline x}\big(D\underline xP_{k,\ell}\underline x\big)&=-\mu_{\ell}D\underline xP_{k,\ell}-\big((2k+m+2)D+r\partial_rD\big)P_{k,\ell}\underline x.\label{ident3-4}
\end{align}
In the same way we can deduce identities for 
\[\big(AP_{k,\ell}\big)\partial_{\underline x},\quad\big(B\underline xP_{k,\ell}\big)\partial_{\underline x},\quad\big(CP_{k,\ell}\underline x\big)\partial_{\underline x}\quad\text{and}\quad\big(D\underline xP_{k,\ell}\underline x\big)\partial_{\underline x}.\]

\begin{lem}\label{lemfund}
Assume that $R_n(\underline x),S_n(\underline x)\in\mathsf{M}_{l}(n)\cap\mathsf{M}_{r}(n)$ for $n=0,\dots,k$ and let $S_k=0$. If 
\begin{equation}\label{FraIguald}
\sum_{\substack{n=0\\n\;{\rm even}}}^k\left(\vert\underline x\vert^{n}R_{k-n}+\vert\underline x\vert^{n-2}\underline xS_{k-n}\underline x\right)+\sum_{\substack{n=1\\n\;{\rm odd}}}^k\vert\underline x\vert^{n-1}\left(\underline xR_{k-n}+S_{k-n}\underline x\right)=0,
\end{equation}
then all polynomials $R_n,S_n$ are identically equal to zero, except possibly $R_0$ and $S_0$. More precisely 
\begin{alignat*}{2}
R_n&=S_n=0,&\quad&n=1,\dots,k,\\
[R_0]_{\ell}&=[S_0]_{\ell}=0,&\quad&\ell=1,\dots,m-1,\\
[R_0]_{0}&=(-1)^k[S_0]_{0},&\quad&[R_0]_{m}=(-1)^{m+k-1}[S_0]_{m}.
\end{alignat*}
\end{lem}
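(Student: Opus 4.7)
The plan is to reorganize (\ref{FraIguald}) as a single sum in powers of $\underline x$, use a short Fischer-type expansion to unfold each $S_{k-n}\underline x$ factor, and then extract coefficient-wise constraints via uniqueness of the Fischer decomposition $\bigoplus_{p=0}^{k}\underline x^{p}\mathsf{M}_{l}(k-p)$. Combining the information obtained from left- and from right-monogenicity pins down the $R_{n}$ and $S_{n}$.

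First I would rewrite (\ref{FraIguald}) using $|\underline x|^{2j}=(-\underline x^{2})^{j}=(-1)^{j}\underline x^{2j}$. A short parity case analysis collapses the four sums into the compact identity
\[
\sum_{p=0}^{k}(-1)^{\lfloor p/2\rfloor}\underline x^{p}R_{k-p}+\sum_{p=0}^{k-1}(-1)^{\lfloor p/2\rfloor}\underline x^{p}S_{k-p-1}\underline x=0.
\]
Next I would establish a short Fischer expansion: for any two-sided monogenic $S_{d}$ of degree $d$, one has $S_{d}\underline x=A_{d+1}+\underline x B_{d}$ with $A_{d+1}\in\mathsf{M}_{l}(d+1)$ and $B_{d}\in\mathsf{M}_{l}(d)$. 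By (\ref{trickyeq}) together with Proposition \ref{deco2sided}, $\partial_{\underline x}(S_{d}\underline x)=\sum_{\ell}\mu_{\ell}S_{d}^{(\ell)}$ is itself left-monogenic, so $\partial_{\underline x}^{2}(S_{d}\underline x)=0$; hence the Fischer decomposition of $S_{d}\underline x$ truncates at order one, and comparing derivatives yields $B_{d}=-(m+2d)^{-1}\sum_{\ell}\mu_{\ell}S_{d}^{(\ell)}$. The right-sided analogue $\underline x R_{d}=E_{d+1}+F_{d}\underline x$ holds with $F_{d}=-(m+2d)^{-1}\sum_{\ell}\mu_{\ell}R_{d}^{(\ell)}$, by the same computation carried out on the right.

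Substituting these short expansions into the rewritten identity and regrouping by powers of $\underline x$ displays $0$ in the Fischer basis; uniqueness then forces each coefficient to vanish, producing a tridiagonal linear system coupling $R_{k-p}$, $A_{k-p}$ (built from $S_{k-p-1}$) and $B_{k-p}$ (built from $S_{k-p}$). Running the same argument in the dual right-Fischer basis $\mathsf{M}_{r}(k-q)\underline x^{q}$ produces a parallel system with the roles of $R$ and $S$ swapped. Combining the two systems grade-wise and inserting the explicit formulas for $B_{d}$ and $F_{d}$ gives, at each interior power $p$ and each grade $\ell$, a scalar factor of the form $N^{2}-\mu_{\ell}^{2}$ (with $N=m+2(k-p)\ge m$) multiplying $R_{k-p}^{(\ell)}$, and likewise for $S_{k-p-1}^{(\ell)}$. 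Since $\mu_{\ell}^{2}\le m^{2}$ with equality only when $\ell=0$ or $\ell=m$, this factor is strictly positive whenever $N>m$ or whenever $1\le\ell\le m-1$, forcing $R_{n}=S_{n}=0$ for every $1\le n\le k$. The one surviving identity is the boundary equation at $p=k$, which reads $R_{0}=(-1)^{k}B_{0}=(-1)^{k+1}m^{-1}\sum_{\ell}\mu_{\ell}S_{0}^{(\ell)}$; specializing to $\mu_{0}=-m$ and $\mu_{m}=(-1)^{m}m$ then recovers $[R_{0}]_{0}=(-1)^{k}[S_{0}]_{0}$ and $[R_{0}]_{m}=(-1)^{m+k-1}[S_{0}]_{m}$, while the intermediate grades $1\le\ell\le m-1$ have already been eliminated.

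The hard part is the bookkeeping: tracking the sign factors $(-1)^{\lfloor p/2\rfloor}$ and $(-1)^{\lfloor(p-1)/2\rfloor}$ through the substitutions, and verifying that the combined left-right Fischer system really produces the clean scalar factor $N^{2}-\mu_{\ell}^{2}$ at each interior power rather than a more tangled relation. The positivity of this factor for intermediate grades, via the identity $m^{2}-\mu_{\ell}^{2}=4\ell(m-\ell)$ at the boundary $N=m$, is the algebraic fact that makes the system effectively triangular and forces the stated vanishing.
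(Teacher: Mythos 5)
The overall idea --- rewrite the sum in powers of $\underline x$, use the truncated Fischer expansion $S_d\underline x = A_{d+1}+\underline x B_d$ (correctly justified via $\partial_{\underline x}^2(S_d\underline x)=0$ and (\ref{trickyeq})), and compare coefficients --- is sound in its ingredients, and your boundary computation $R_0=(-1)^k B_0$ with $B_0=-m^{-1}\sum_\ell\mu_\ell[S_0]_\ell$ is correct and does reproduce the stated relations between $[R_0]_0,[S_0]_0$ and $[R_0]_m,[S_0]_m$. But the decisive step --- that ``the combined left-right Fischer system really produces the clean scalar factor $N^2-\mu_\ell^2$ at each interior power,'' forcing $R_n=S_n=0$ for $n\ge1$ and killing the intermediate grades of $R_0,S_0$ --- is asserted, not proved, and you yourself flag it as ``the hard part.'' It is in fact where the argument breaks. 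The coefficients $A_d$ (the left-monogenic part of $S_{d-1}\underline x$) and the analogous right-Fischer pieces $E_d$, $E'_d$ are genuine polynomials, not grade-wise scalar multiples of $S_{d-1}$ or $R_{d-1}$, so the system remains tridiagonal in the degrees and does not decouple ``grade-wise'' the way your outline suggests. Worse, the two systems are not mirror images of each other: already at $k=2$ the left-Fischer equation at degree $d=0$ gives $R_0=B_0=-m^{-1}\sum_\ell\mu_\ell[S_0]_\ell$, and the right-Fischer equation at the same degree gives $R_0=F'_0$, which by direct computation equals the same quantity $-m^{-1}\sum_\ell\mu_\ell[S_0]_\ell$. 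So the two systems at degree zero yield one and the same equation, and no factor $m^2-\mu_\ell^2$ appears at that level without importing information from the coupled degree-one equations, which in turn couple to degree zero through $A_1$, $E'_1$.

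The paper escapes this entanglement by induction on $k$: applying $\partial_{\underline x}$ from the left to the degree-$(k+1)$ identity and using identities (\ref{ident1})--(\ref{ident3-4}) produces a new identity of exactly the form (\ref{FraIguald}) one degree lower, so the induction hypothesis kills all the lower pieces at once and also delivers the relation $(2k+m)[R_k]_\ell=\mu_\ell[S_k]_\ell$; a single subsequent application of $\partial_{\underline x}$ from the right to the surviving $R_{k+1}+\underline x R_k+S_k\underline x=0$ yields the complementary relation $\mu_\ell[R_k]_\ell=(2k+m)[S_k]_\ell$, and only then does the determinant $(2k+m)^2-\mu_\ell^2>0$ appear and close the argument. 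If you want to keep your Fischer-expansion framework rather than differentiate, you would still need to introduce the same inductive reduction (or an equivalent triangularization of the coupled system) --- the direct coefficient-by-coefficient comparison, as outlined, does not produce the claimed decoupling.
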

\begin{proof}
We shall prove the assertion by induction. When $k=1$ we have
\[R_1+\underline xR_0+S_0\underline x=0,\]
from which we obtain 
\begin{align*}
0&=\partial_{\underline x}(R_1+\underline xR_0+S_0\underline x)=-mR_0+\sum_{\ell=0}^m\mu_{\ell}[S_0]_{\ell},\\
0&=(R_1+\underline xR_0+S_0\underline x)\partial_{\underline x}=\sum_{\ell=0}^m\mu_{\ell}[R_0]_{\ell}-mS_0
\end{align*}
and hence
\begin{equation*}
\left\{\begin{array}{ll}m[R_0]_{\ell}-\mu_{\ell}[S_0]_{\ell}&=0\\\mu_{\ell}[R_0]_{\ell}-m[S_0]_{\ell}&=0.\end{array}\right.
\end{equation*}
It thus follows that 
\begin{alignat*}{2}
[R_0]_{\ell}&=[S_0]_{\ell}=0,&\quad&\ell=1,\dots,m-1,\\ 
[R_0]_{0}&=-[S_0]_{0},&\quad&[R_0]_{m}=(-1)^{m}[S_0]_{m},
\end{alignat*}
showing also that $\underline xR_0+S_0\underline x=0$ and therefore $R_1=0$. The statement is then true for $k=1$. 

Now we proceed to show that if the assertion holds for some positive integer $k\ge1$, then it also holds $k+1$. First, note that for $k+1$ equality (\ref{FraIguald}) may be rewritten as
\[R_{k+1}+\sum_{\substack{n=0\\n\;\text{even}}}^k\vert\underline x\vert^{n}\left(\underline xR_{k-n}+S_{k-n}\underline x\right)+\sum_{\substack{n=1\\n\;\text{odd}}}^k\left(\vert\underline x\vert^{n+1}R_{k-n}+\vert\underline x\vert^{n-1}\underline xS_{k-n}\underline x\right)=0.\]
Letting the Dirac operator $\partial_{\underline x}$ act from the left on the last equality, we obtain
\begin{multline*}
\sum_{\substack{n=0\\n\;\text{even}}}^k\bigg(\vert\underline x\vert^{n}\sum_{\ell}\Big(\mu_{\ell}[S_{k-n}]_{\ell}-(2k+m-n)[R_{k-n}]_{\ell}\Big)+n\vert\underline x\vert^{n-2}\underline xS_{k-n}\underline x\bigg)\\
+\sum_{\substack{n=1\\n\;\text{odd}}}^k\vert\underline x\vert^{n-1}\bigg(\underline x\sum_{\ell}\Big((n+1)[R_{k-n}]_{\ell}-\mu_{\ell}[S_{k-n}]_{\ell}\Big)-(2k+m-n+1)S_{k-n}\underline x\bigg)=0,
\end{multline*}
where we have used identities (\ref{ident1})-(\ref{ident3-4}). On account of Proposition \ref{deco2sided} and since we have assumed that the assertion is true for $k$, it easily follows from the last equality that
\begin{equation}\label{twinid}
(2k+m)[R_k]_{\ell}-\mu_{\ell}[S_k]_{\ell}=0
\end{equation}
and
\begin{alignat*}{2}
R_n&=S_n=0,&\quad&n=1,\dots,k-1,\\
[R_0]_{\ell}&=[S_0]_{\ell}=0,&\quad&\ell=1,\dots,m-1,\\
[R_0]_{0}&=(-1)^{k+1}[S_0]_{0},&\quad&[R_0]_{m}=(-1)^{m+k}[S_0]_{m}.
\end{alignat*}
These equalities imply that $R_{k+1}+\underline x R_k+S_k\underline x=0$. If we now let $\partial_{\underline x}$ act from the right, then we get
\[\mu_{\ell}[R_k]_{\ell}-(2k+m)[S_k]_{\ell}=0,\]
which together with (\ref{twinid}) clearly implies that $[R_k]_{\ell}=[S_k]_{\ell}=0$ and hence $R_{k+1}=R_k=S_k=0$.
\end{proof}

We next recall two fundamental decompositions for homogeneous polynomials. The first one is the classical Fischer decomposition in terms of harmonic homogeneous polynomials while the second one is given using two-sided monogenic homogeneous polynomials (see e.g. \cite{DSS}). 

\begin{thm}[Fischer decompositions]\label{Fisdecomp}
Let $\mathsf{P}(k)$ be the set of all homogeneous polynomials of degree $k$ in $\mathbb R^m$. By $\mathsf{H}(k)$ we denote the polynomials in $\mathsf{P}(k)$ which are harmonic. If $P_k(\underline x)\in\mathsf{P}(k)$, then the following two decompositions hold:
\begin{align*}
P_k&=H_k+\vert\underline x\vert^2P_{k-2},\quad H_k\in\mathsf{H}(k),\;P_{k-2}\in\mathsf{P}(k-2),\\
P_k&=M_k+\underline xP_{k-1}+Q_{k-1}\underline x,\quad M_k\in\mathsf{M}_{l}(k)\cap\mathsf{M}_{r}(k),\;P_{k-1},Q_{k-1}\in\mathsf{P}(k-1).
\end{align*}
\end{thm}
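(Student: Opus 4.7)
The plan is to derive both decompositions from the standard Fischer pairing. Introduce on (Clifford-algebra-valued) polynomials the bilinear form
\[ \langle P,Q\rangle = \bigl[\,\overline{P(\partial_{\underline x})}\,Q(\underline x)\,\bigr]_{0}\Big|_{\underline x=0}, \]
where $P(\partial_{\underline x})$ denotes the constant-coefficient differential operator obtained by replacing each $x_j$ in $P$ by $\partial_{x_j}$. A direct check shows that this form restricts to a positive-definite inner product on each homogeneous component $\mathsf{P}(k)$, so every subspace of $\mathsf{P}(k)$ admits a Fischer orthogonal complement.

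For the classical harmonic decomposition I would first verify the adjointness relation $\langle|\underline x|^2 P,Q\rangle = \langle P,\Delta Q\rangle$ for $P\in\mathsf{P}(k-2)$ and $Q\in\mathsf{P}(k)$, which follows from the elementary substitution identity $(|\underline x|^2)(\partial_{\underline x}) = \sum_{j=1}^m\partial_{x_j}^2 = \Delta$. Consequently the Fischer orthogonal complement of $|\underline x|^2\,\mathsf{P}(k-2)$ inside $\mathsf{P}(k)$ equals $\{Q\in\mathsf{P}(k):\Delta Q=0\}=\mathsf{H}(k)$, yielding the direct sum $\mathsf{P}(k)=\mathsf{H}(k)\oplus|\underline x|^2\,\mathsf{P}(k-2)$.

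For the two-sided monogenic Fischer decomposition I would establish, by the same substitution technique applied now to the Clifford-valued pairing, the adjointness formulas $\langle\underline x P,Q\rangle = -\langle P,\partial_{\underline x}Q\rangle$ and $\langle P\underline x,Q\rangle = -\langle P,Q\partial_{\underline x}\rangle$ (where in the latter the Dirac operator acts from the right). These show that the Fischer orthogonal complement of the subspace $\underline x\,\mathsf{P}(k-1)+\mathsf{P}(k-1)\,\underline x$ inside $\mathsf{P}(k)$ consists exactly of those $M_k$ satisfying both $\partial_{\underline x}M_k=0$ and $M_k\partial_{\underline x}=0$, that is $\mathsf{M}_{l}(k)\cap\mathsf{M}_{r}(k)$. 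The desired decomposition $P_k = M_k + \underline x P_{k-1} + Q_{k-1}\,\underline x$ then follows by writing $P_k$ as the sum of its orthogonal projections onto the complement and onto $\underline x\,\mathsf{P}(k-1)+\mathsf{P}(k-1)\,\underline x$.

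The main obstacle I anticipate is tracking the interaction of Clifford conjugation~(\ref{revconj}) with the noncommutative substitution $x_j\mapsto\partial_{x_j}$ when computing the Fischer adjoints of left and right multiplication by $\underline x$: the positions of Clifford factors matter on both sides of the pairing, and the minus sign in $\overline{e_j}=-e_j$ has to be combined with the order-reversal $\overline{ab}=\overline b\,\overline a$ so that the bookkeeping produces the correct signs in the two adjointness identities. Once those identities are carefully verified, both decompositions are immediate from the finite-dimensional orthogonal complement argument.
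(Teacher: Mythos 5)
The paper does not prove this theorem: it is stated as a recalled result with a pointer to \cite{DSS}, so there is no argument in the paper to compare against. Your Fischer-pairing approach is the standard one for both decompositions and is correct in outline. The one ingredient you flag as a potential obstacle but do not actually resolve is worth spelling out: for the adjointness $\langle P\underline x,Q\rangle=-\langle P,Q\partial_{\underline x}\rangle$, conjugation gives $\overline{(P\underline x)(\partial_{\underline x})}=-\,\partial_{\underline x}\,\overline{P(\partial_{\underline x})}$, and when applied to $Q$ the extra vector $e_j$ sits on the \emph{left} of the Clifford product, whereas in $Q\partial_{\underline x}$ it sits on the \emph{right}. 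These expressions are not equal as Clifford numbers; they agree only after projecting onto the scalar part, by the cyclic identity $[ab]_0=[ba]_0$ for $a,b\in\mathbb R_{0,m}$. Once that is invoked the rest goes through: positive definiteness of the pairing on each $\mathsf P(k)$ follows from $[\overline{e_A}e_B]_0=\delta_{AB}$ together with the positivity of the scalar Fischer inner product, the orthogonal complement of a sum is the intersection of the orthogonal complements, so the complement of $\underline x\,\mathsf P(k-1)+\mathsf P(k-1)\,\underline x$ is exactly $\mathsf M_l(k)\cap\mathsf M_r(k)$, and decomposing $P_k$ into its two orthogonal projections yields the stated (not necessarily unique) representation $P_k=M_k+\underline x P_{k-1}+Q_{k-1}\underline x$.
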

\noindent
Before proving the main result of the section it is useful to notice the following.

\begin{rem}
 An analytic function $g(\underline x)$ has a two-sided monogenic extension if and only if it satisfies the condition $\partial_{\underline x}g=g\partial_{\underline x}$. Indeed, if $f(x_0,\underline x)$ is a two-sided monogenic extension of $g$, then from {\rm(}\ref{TSidedEq}{\rm)} it follows that $\partial_{\underline x}f=f\partial_{\underline x}$ and hence $\partial_{\underline x}g=g\partial_{\underline x}$. Finally, observe that this condition implies that $\mathsf{CK}[g(\underline x)]$ is two-sided monogenic. 
\end{rem}

\begin{thm}\label{desc2sided}
Suppose that $M_k(x_0,\underline x)$ is a two-sided monogenic homogeneous polynomial of degree $k$ in $\mathbb R^{m+1}$. Then there exist polynomials $S_n(\underline x)\in\mathsf{M}_{l}(n)\cap\mathsf{M}_{r}(n)$, $n=0,\dots,k$, such that
\begin{multline*}
M_k(x_0,\underline x)=S_k(\underline x)+\sum_{\substack{n=1\\n\;{\rm odd}}}^k\mathsf{CK}\Big[\vert\underline x\vert^{n-1}\big(\underline xS_{k-n}(\underline x)+S_{k-n}(\underline x)\underline x\big)\Big](x_0,\underline x)\\
+\sum_{\substack{n=2\\n\;{\rm even}}}^k\sum_{\ell}\mathsf{CK}\Big[\lambda_{n,\ell}\vert\underline x\vert^{n}[S_{k-n}(\underline x)]_{\ell}+\vert\underline x\vert^{n-2}\underline x[S_{k-n}(\underline x)]_{\ell}\underline x\Big](x_0,\underline x),
\end{multline*}
where $\lambda_{n,\ell}=-\displaystyle{\frac{(2k+m-n-\mu_{\ell})}{n}}$.
\end{thm}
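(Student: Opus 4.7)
The plan is to reduce to a problem on $\mathbb{R}^{m}$ via the Cauchy-Kowalevski extension. By the Remark preceding this theorem, $M_k$ is two-sided monogenic if and only if its restriction $g(\underline x) := M_k(0,\underline x) \in \mathsf{P}(k)$ satisfies $\partial_{\underline x} g = g \partial_{\underline x}$, and in that case $M_k = \mathsf{CK}[g]$ by the uniqueness in Theorem \ref{CKextThm}. Since the CK extension is linear, it suffices to produce polynomials $S_n \in \mathsf{M}_{l}(n) \cap \mathsf{M}_{r}(n)$ such that $g$ coincides with the $x_0 = 0$ restriction of the claimed right-hand side, provided each building block has itself a two-sided monogenic CK extension. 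The latter amounts, by the same Remark, to verifying $\partial_{\underline x} g_0 = g_0 \partial_{\underline x}$ for each block $g_0$, which is a direct calculation using the identities (\ref{ident1})--(\ref{ident3-4}) and (\ref{trickyeq}).

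The existence of the $S_n$'s I would obtain by induction on $k$. The base case $k = 0$ gives $g = S_0$ trivially. In the inductive step, I apply the two-sided Fischer decomposition of Theorem \ref{Fisdecomp} to extract $g = S_k + \underline x P_{k-1} + Q_{k-1}\underline x$ with $S_k \in \mathsf{M}_{l}(k) \cap \mathsf{M}_{r}(k)$, then iterate Fischer on $P_{k-1}$ and $Q_{k-1}$, collapsing even powers of $\underline x$ via $\underline x^{2} = -|\underline x|^{2}$. This writes $g$ as a sum of terms of the four shapes $|\underline x|^{a}T$, $|\underline x|^{a}\underline x T$, $|\underline x|^{a}T\underline x$, and $|\underline x|^{a}\underline x T\underline x$ with $T$ two-sided monogenic. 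The commutativity constraint $\partial_{\underline x} g = g \partial_{\underline x}$, applied grade-by-grade through Proposition \ref{deco2sided}, then pairs these terms: for odd $n$, $\underline x T$ must be accompanied by $T\underline x$ in the symmetric combination $\underline x T + T\underline x$; for even $n$, each grade-$\ell$ component $|\underline x|^{n}[T]_{\ell}$ must be accompanied by $|\underline x|^{n-2}\underline x [T]_{\ell}\underline x$ in the specific ratio $\lambda_{n,\ell}$ prescribed by commutativity.

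The explicit value $\lambda_{n,\ell} = -(2k+m-n-\mu_{\ell})/n$ emerges from acting with $\partial_{\underline x}$ on a candidate even block $\alpha|\underline x|^{n}T + |\underline x|^{n-2}\underline x T \underline x$ with $T$ of pure grade $\ell$: the identities (\ref{ident1})--(\ref{ident3-4}), applied with $k$ replaced by $k-n$, give a single linear equation in $\alpha$ whose solution is precisely $\lambda_{n,\ell}$; notably this matches the Vekua system (\ref{Veq2sided}) restricted to $x_0 = 0$, which is reassuring. Uniqueness of the assembled decomposition, needed so that the grade-by-grade matching is well-defined and the $S_{k-n}$ are not over-determined, is supplied by Lemma \ref{lemfund}. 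The main obstacle I anticipate is the combinatorial bookkeeping in the iterated Fischer expansion -- showing that after all the $\underline x^{2} = -|\underline x|^{2}$ substitutions the terms organize exactly into the odd-$n$ and even-$n$ blocks of the statement -- together with the verification that the commutativity constraint at each degree uniquely forces the $\lambda_{n,\ell}$-proportionality; once this matching is in place, the coefficient calculation and the final CK-extension step are routine.
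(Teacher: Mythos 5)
Your proposal is correct and follows essentially the same route as the paper: Cauchy--Kowalevski reduction to the restriction $g=M_k(0,\underline x)$, iterated Fischer decomposition to express $g$ in the four shapes $\vert\underline x\vert^{a}T$, $\vert\underline x\vert^{a}\underline xT$, $\vert\underline x\vert^{a}T\underline x$, $\vert\underline x\vert^{a}\underline xT\underline x$ with $T$ two-sided monogenic, then the commutativity constraint $\partial_{\underline x}g=g\partial_{\underline x}$ together with Lemma~\ref{lemfund} to pin down the coefficient relations that yield $\lambda_{n,\ell}$. One small correction of framing: what you call ``induction on $k$'' is really a direct iterated application of the Fischer decomposition (the inductive hypothesis is never invoked on a lower-degree two-sided monogenic), which is exactly how the paper sets it up.
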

\begin{proof}
By Theorem \ref{CKextThm} we have that $M_k(x_0,\underline x)=\mathsf{CK}[M_k(0,\underline x)](x_0,\underline x)$. As $M_k(0,\underline x)\in\mathsf{P}(k)$ it follows from the second Fischer decomposition of Theorem \ref{Fisdecomp} that 
\[M_k(0,\underline x)=\sum_{n_1=0}^k\sum_{n_2=0}^{n_1}\underline x^{n_1-n_2}M_{k-n_1,n_2}(\underline x)\underline x^{n_2},\] 
where $M_{k-n_1,n_2}(\underline x)\in\mathsf{M}_{l}(k-n_1)\cap\mathsf{M}_{r}(k-n_1)$. Observe that  $\underline x^{n_1-n_2}M_{k-n_1,n_2}\underline x^{n_2}$ may be rewritten as
\[(-1)^{\frac{n_1}{2}}\vert\underline x\vert^{n_1}M_{k-n_1,n_2}\quad\text{or}\quad(-1)^{\frac{n_1-2}{2}}\vert\underline x\vert^{n_1-2}\underline xM_{k-n_1,n_2}\underline x,\]
for $n_1$ even, while for $n_1$ odd $\underline x^{n_1-n_2}M_{k-n_1,n_2}\underline x^{n_2}$ equals
\[(-1)^{\frac{n_1-1}{2}}\vert\underline x\vert^{n_1-1}\underline xM_{k-n_1,n_2}\quad\text{or}\quad(-1)^{\frac{n_1-1}{2}}\vert\underline x\vert^{n_1-1}M_{k-n_1,n_2}\underline x.\]
Therefore, there exist $R_n(\underline x),S_n(\underline x)\in\mathsf{M}_{l}(n)\cap\mathsf{M}_{r}(n)$ so that
\begin{multline*}
M_k(0,\underline x)=R_k(\underline x)+\sum_{\substack{n=1\\n\;\text{odd}}}^k\vert\underline x\vert^{n-1}\big(\underline xR_{k-n}(\underline x)+S_{k-n}(\underline x)\underline x\big)\\
+\sum_{\substack{n=2\\n\;\text{even}}}^k\big(\vert\underline x\vert^{n}R_{k-n}(\underline x)+\vert\underline x\vert^{n-2}\underline xS_{k-n}(\underline x)\underline x\big).
\end{multline*}
Note that $M_k(0,\underline x)$ must satisfy the condition $\partial_{\underline x}M_k(0,\underline x)=M_k(0,\underline x)\partial_{\underline x}$ since $M_k(x_0,\underline x)$ is two-sided monogenic. We thus get 
\begin{multline*}
\sum_{\substack{n=0\\n\;\text{even}}}^{k-1}\bigg(\vert\underline x\vert^{n}\sum_{\ell}a_{n,\ell}\Big([S_{k-n-1}]_{\ell}-[R_{k-n-1}]_{\ell}\Big)+n\vert\underline x\vert^{n-2}\underline x\Big(S_{k-n-1}-R_{k-n-1}\Big)\underline x\bigg)\\
+\sum_{\substack{n=1\\n\;\text{odd}}}^{k-1}\vert\underline x\vert^{n-1}\bigg(\underline x\sum_{\ell}\Big((n+1)[R_{k-n-1}]_{\ell}+b_{n,\ell}[S_{k-n-1}]_{\ell}\Big)\\
-\sum_{\ell}\Big((n+1)[R_{k-n-1}]_{\ell}+b_{n,\ell}[S_{k-n-1}]_{\ell}\Big)\underline x\bigg)=0
\end{multline*}
where $a_{n,\ell}=2k+m+\mu_{\ell}-n-2$ and $b_{n,\ell}=2k+m-\mu_{\ell}-n-1$. Lemma \ref{lemfund} now yields 
\begin{alignat*}{2}
R_{k-n}&=S_{k-n},&\quad& n\;\text{odd}\\
[R_{k-n}]_{\ell}&=\lambda_{n,\ell}[S_{k-n}]_{\ell},&\quad& n\;\text{even},
\end{alignat*}
for $n=1,\dots,k-1$. These relations can be assumed also in the case $n=k$. This leads to the desired result.
\end{proof}

\begin{cor}\label{CKxPx}
Let k and n denote non-negative integers. Every two-sided monogenic homogeneous polynomial in $\mathbb R^{m+1}$ can always be written as a finite sum of axial two-sided monogenic polynomials of the form 
\begin{equation}\label{buildblocks2sided}
\mathsf{CK}\big[\alpha_{n,\ell}\vert\underline x\vert^{2n}P_{k,\ell}(\underline x)+\vert\underline x\vert^{2n-2}\underline xP_{k,\ell}(\underline x)\underline x\big],\quad\mathsf{CK}\big[\vert\underline x\vert^{2n}\left(\underline xP_{k,\ell}(\underline x)+P_{k,\ell}(\underline x)\underline x\right)\big],
\end{equation}
where $P_{k,\ell}$ is an $\mathbb R_{0,m}^{(\ell)}$-valued polynomial belonging to $\mathsf{M}_{l}(k)$ $(0\le\ell\le m)$ and 
\[\alpha_{n,\ell}=\displaystyle{-\frac{2k+2n+m-\mu_{\ell}}{2n}}.\]
\end{cor}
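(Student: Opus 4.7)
The plan is to derive the corollary as a direct refinement of Theorem \ref{desc2sided}, using Proposition \ref{deco2sided} to split each $\mathsf{M}_{l}(k-n)\cap\mathsf{M}_{r}(k-n)$-valued polynomial $S_{k-n}$ appearing there into its $\ell$-vector components.

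First I would invoke Theorem \ref{desc2sided} to express $M_k(x_0,\underline x)$ as the sum of the base term $S_k(\underline x)$ and the CK-extensions of the odd- and even-indexed expressions involving $S_{k-n}$. Since each $S_{k-n}$ is in particular two-sided monogenic, Proposition \ref{deco2sided} lets me write $S_{k-n}=\sum_{\ell}[S_{k-n}]_{\ell}$, with every $[S_{k-n}]_{\ell}$ both $\ell$-vector valued and left monogenic (so it belongs to $\mathsf{M}_{l}(k-n)$). Setting $P_{k-n,\ell}:=[S_{k-n}]_{\ell}$, every term in the theorem's decomposition becomes a finite sum of expressions of the shape appearing in (\ref{buildblocks2sided}). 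To match the two forms exactly, I would relabel indices: put $n'=(n-1)/2$ in the odd-$n$ sum, so that $\vert\underline x\vert^{n-1}=\vert\underline x\vert^{2n'}$ and the term matches the second form in (\ref{buildblocks2sided}); put $n'=n/2$ in the even-$n$ sum, so that $\vert\underline x\vert^n$ and $\vert\underline x\vert^{n-2}$ become $\vert\underline x\vert^{2n'}$ and $\vert\underline x\vert^{2n'-2}$, matching the first form. The base term $S_k(\underline x)$ equals its own CK-extension (it is already monogenic in $\underline x$) and splits as $\sum_{\ell}\mathsf{CK}[P_{k,\ell}]$, the trivially axial $n=0$ instance.

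The only remaining item is the verification that the coefficient $\lambda_{n,\ell}$ of Theorem \ref{desc2sided} becomes the coefficient $\alpha_{n',\ell}$ of the corollary after the substitutions $n=2n'$ and (total degree) $k_{\mathrm{thm}}=k_{\mathrm{cor}}+2n'$; this is a routine algebraic identity. I do not foresee any conceptual obstacle, since the substantive work --- Lemma \ref{lemfund} and the second Fischer decomposition --- has already been absorbed into Theorem \ref{desc2sided}, and proving the corollary is then essentially a matter of applying Proposition \ref{deco2sided} term by term and reindexing.
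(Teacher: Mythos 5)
Your proposal follows essentially the same route as the paper: both derive the corollary directly from Theorem \ref{desc2sided}, and your reindexing ($n_{\rm thm}=2n'$ or $2n'+1$, $k_{\rm cor}=k_{\rm thm}-2n'$) together with the application of Proposition \ref{deco2sided} to split $S_{k-n}$ into $\ell$-vector components is exactly what is needed to put the theorem's output into the shape (\ref{buildblocks2sided}); I verified that $\lambda_{2n',\ell}=-\frac{2k-2n'+m-\mu_\ell}{2n'}=\alpha_{n',\ell}$ after this substitution, so the coefficient identity you flag does hold. You are actually more explicit than the paper about this bookkeeping, which the published proof dismisses with ``actually shows.''

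There is, however, one piece of the corollary's assertion you do not address and which the paper devotes most of its (short) proof to: showing that the CK-extensions (\ref{buildblocks2sided}) are themselves \emph{axial two-sided monogenic}, not merely left monogenic polynomials of a convenient shape. Theorem \ref{desc2sided} only produces left monogenic CK-extensions; one still needs to check (a) two-sided monogenicity, which the paper gets from the observation that the restrictions to $\mathbb R^m$ satisfy $\partial_{\underline x}g=g\,\partial_{\underline x}$ (a short computation using identities (\ref{ident1})--(\ref{ident3-4}) and the choice of $\alpha_{n,\ell}$), and (b) that the resulting extensions are of the axial form (\ref{AxialTSMF}), which again follows from the same identities applied iteratively in the CK series. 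Your proposal asserts that the pieces are ``axial two-sided monogenic polynomials'' but never justifies that claim, so this step should be filled in; it is not difficult, but it is genuinely part of what the corollary states.
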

\begin{proof}
Observe that Theorem \ref{desc2sided} actually shows that any two-sided monogenic homogeneous polynomial in $\mathbb R^{m+1}$ can be decomposed as a finite sum of left monogenic polynomials of the form (\ref{buildblocks2sided}). We can claim that these polynomials are two-sided monogenic since their restriction to $\mathbb R^{m}$ satisfy the condition $\partial_{\underline x}g=g\partial_{\underline x}$. Finally, with the help of identities (\ref{ident1})-(\ref{ident3-4}), it is easily seen that they are of the form (\ref{AxialTSMF}) and hence are axial two-sided monogenic polynomials.
\end{proof}

\section{Monogenic plane waves leading to axial two-sided monogenics}\label{secc3}

Let $h(x,y)=u(x,y)+iv(x,y)$ be a holomorphic function and assume that $\underline t\in S^{m-1}$ is a fixed unit vector. It is easy to verify that 
\[(\partial_{x_0}+\partial_{\underline x})h(x_0,\theta)=\partial_{x_0}h(x_0,\theta)+\underline t\,\partial_{\theta}h(x_0,\theta)=(1+i\underline t)\partial_{x_0}h(x_0,\theta),\]
where $\theta=\langle\underline x,\underline t\rangle$. Using now the fact that $1+i\underline t$ and $1-i\underline t$ are zero divisors, we get
\[(\partial_{x_0}+\partial_{\underline x})\big((1-i\underline t)h(x_0,\theta)\big)=(1+i\underline t)(1-i\underline t)\partial_{x_0}h(x_0,\theta)=0,\]
which implies that $(1-i\underline t)h(x_0,\theta)$ is a monogenic plane wave. 

Starting with these monogenic plane waves and using Funk-Hecke's formula we will be able to devise a method for constructing axial two-sided monogenic functions. For the reader's convenience we first recall:

\begin{thm}[Funk-Hecke's formula \cite{Hoch}]
Suppose that $\displaystyle{\int_{-1}^1\vert F(t)\vert(1-t^2)^{(m-3)/2}dt<\infty}$ and let $\underline\xi\in S^{m-1}$. If $Y_k(\underline x)$ is a spherical harmonic of degree $k$ in $\mathbb R^m$, then 
\[\int_{S^{m-1}}F(\langle\underline\xi,\underline\eta\rangle)Y_k(\underline\eta)dS(\underline\eta)=\sigma_{m-1}C_k(1)^{-1}Y_{k}(\underline\xi)\int_{-1}^1F(t)C_k(t)\left(1-t^2\right)^{(m-3)/2}dt,\]
where $C_k(t)$ denotes the Gegenbauer polynomial $C^{\nu}_k(t)$ with $\nu=(m-2)/2$ and $\sigma_{m-1}$ is the surface area of the unit sphere $S^{m-2}$ in $\mathbb R^{m-1}$.
\end{thm}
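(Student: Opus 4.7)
The plan is to prove Funk–Hecke's formula by a classical representation–theoretic / invariance argument combined with an explicit zonal computation. Define the integral operator
\[
(T_F Y)(\underline\xi)=\int_{S^{m-1}}F(\langle\underline\xi,\underline\eta\rangle)Y(\underline\eta)\,dS(\underline\eta),
\]
acting on functions on $S^{m-1}$. The key structural observation is that $T_F$ commutes with the natural $SO(m)$-action: if $R\in SO(m)$ and $(U_R Y)(\underline\xi):=Y(R^{-1}\underline\xi)$, then changing variables $\underline\eta\mapsto R\underline\eta$ in the integral uses only $\langle R\underline\xi,R\underline\eta\rangle=\langle\underline\xi,\underline\eta\rangle$ and the rotation-invariance of the surface measure, yielding $T_F U_R=U_R T_F$.

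Next I would invoke the fact that the space $\mathsf{H}(k)\big|_{S^{m-1}}$ of spherical harmonics of degree $k$ is an irreducible $SO(m)$-representation. By Schur's lemma, $T_F$ acts on this space by a scalar $\lambda_k=\lambda_k(F)$, so $T_F Y_k=\lambda_k Y_k$ for every $Y_k$. It remains to identify $\lambda_k$ with the explicit one-variable integral asserted in the theorem.

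To compute $\lambda_k$, I would specialize $Y_k$ to a zonal harmonic based at $\underline\xi$: there is a unique (up to a constant) $SO(m-1)$-invariant element $Z_{\underline\xi,k}\in\mathsf{H}(k)$ depending only on $t=\langle\underline\xi,\underline\eta\rangle$, and the classical fact is that it is proportional to the Gegenbauer polynomial, namely $Z_{\underline\xi,k}(\underline\eta)=c_{m,k}\,C_k(\langle\underline\xi,\underline\eta\rangle)$ with $\nu=(m-2)/2$. Evaluating both sides of $T_F Z_{\underline\xi,k}=\lambda_k Z_{\underline\xi,k}$ at $\underline\eta=\underline\xi$ gives
\[
\lambda_k\,C_k(1)=\int_{S^{m-1}}F(\langle\underline\xi,\underline\eta\rangle)\,C_k(\langle\underline\xi,\underline\eta\rangle)\,dS(\underline\eta).
\]
Slicing $S^{m-1}$ by the parallel hyperplanes $\langle\underline\xi,\underline\eta\rangle=t$ (each an $(m-2)$-sphere of radius $\sqrt{1-t^2}$ with total measure $\sigma_{m-1}(1-t^2)^{(m-3)/2}$) collapses the right-hand side to $\sigma_{m-1}\int_{-1}^{1}F(t)C_k(t)(1-t^2)^{(m-3)/2}\,dt$, which produces the stated formula after solving for $\lambda_k$.

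The main obstacles are the two non-formal inputs: first, the irreducibility of $\mathsf{H}(k)$ as an $SO(m)$-module (so that Schur's lemma applies), and second, the identification of the zonal harmonic with the Gegenbauer polynomial together with the correct normalization constant — this is what pins down the factor $C_k(1)^{-1}$ in front. The integrability hypothesis on $F$ is exactly what is needed so that Fubini's theorem is applicable in the coarea/slicing step, and this is where the weight $(1-t^2)^{(m-3)/2}$ enters the final one-dimensional integral.
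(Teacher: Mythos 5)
The paper does not prove Funk--Hecke's formula: it is stated as a classical result, cited to Hochstadt \cite{Hoch}, and used as a black box in the proof of Theorem \ref{PlaWavMeth}. So there is no in-paper proof to compare against, and the proposal must be judged on its own merits.

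Your sketch is a correct and standard representation-theoretic proof. The structure --- $T_F$ commutes with the $SO(m)$-action, $\mathsf{H}(k)|_{S^{m-1}}$ is irreducible (for $m\ge 3$, which is implicit in the hypothesis $(1-t^2)^{(m-3)/2}$), Schur's lemma yields a scalar $\lambda_k$, and the zonal harmonic $C_k(\langle\underline\xi,\cdot\rangle)$ together with a coarea slicing pins down $\lambda_k$ --- is exactly how this is usually proved. Two small points of wording are worth tightening. First, ``evaluating both sides \dots at $\underline\eta=\underline\xi$'' should refer to the free output variable of $T_F Z_{\underline\xi,k}$ (call it $\underline\zeta$), not the bound integration variable $\underline\eta$; setting $\underline\zeta=\underline\xi$ is what produces $\lambda_k C_k(1)$ on the right. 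Second, the parenthetical ``each an $(m-2)$-sphere of radius $\sqrt{1-t^2}$ with total measure $\sigma_{m-1}(1-t^2)^{(m-3)/2}$'' conflates the $(m-2)$-dimensional area of the slice, which is $\sigma_{m-1}(1-t^2)^{(m-2)/2}$, with the correct coarea factorization $dS(\underline\eta)=(1-t^2)^{(m-3)/2}\,dt\,dS(\underline\omega)$; the missing half power comes from the Jacobian $|\nabla_{S^{m-1}} t|^{-1}=(1-t^2)^{-1/2}$. The final one-dimensional integral you write down is nonetheless correct. Given that the theorem is cited without proof in the paper, your outline is a perfectly reasonable self-contained justification, provided the two classical inputs (irreducibility of $\mathsf{H}(k)$ and the zonal-harmonic/Gegenbauer identification with the normalization $Z_{\underline\xi,k}(\underline\xi)\propto C_k(1)$) are either quoted with references or proved separately.
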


Let $\Delta_{\underline x}=\sum_{j=1}^m\partial_{x_j}^2$ be the Laplacian in $\mathbb R^{m}$ and assume that $P_{k,\ell}$ is an $\mathbb R_{0,m}^{(\ell)}$-valued polynomial belonging to $\mathsf{M}_{l}(k)$. Applying the following identity
\[\Delta_{\underline x}(fg)=(\Delta_{\underline x}f)g+2\sum_{j=1}^m(\partial_{x_j}f)(\partial_{x_j}g)+f(\Delta_{\underline x}g),\]
 one can easily check that polynomials $\underline xP_{k,\ell}, P_{k,\ell}\underline x$ are harmonic and that
 \begin{align*}
\Delta_{\underline x}(\underline xP_{k,\ell}\underline x)&=2\partial_{\underline x}(P_{k,\ell}\underline x)=2\mu_{\ell}P_{k,\ell}\\
\Delta_{\underline x}\left(\vert\underline x\vert^2P_{k,\ell}\right)&=\left(\Delta_{\underline x}\vert\underline x\vert^2\right)P_{k,\ell}+4\sum_{j=1}^mx_j\partial_{x_j}P_{k,\ell}=2(2k+m)P_{k,\ell}.
\end{align*}
The last two equalities enable us to get the classical Fischer decomposition of $\underline xP_{k,\ell}\underline x$, namely:
\begin{equation}\label{Fisch2term}
\underline xP_{k,\ell}\underline x=\left(\underline xP_{k,\ell}\underline x-\vert\underline x\vert^2\frac{\mu_{\ell}}{2k+m}P_{k,\ell}\right)+\vert\underline x\vert^2\frac{\mu_{\ell}}{2k+m}P_{k,\ell}.
\end{equation}

\begin{thm}\label{PlaWavMeth}
The function defined by  
\[I_h(x_0,\underline x)=\frac{1}{\sigma_{m-1}}\int_{S^{m-1}}h(x_0,\langle\underline x,\underline t\rangle)(1-i\underline t)P_{k,\ell}(\underline t)(1-i\underline t)dS(\underline t)\]
is axial two-sided monogenic with
\begin{multline*}
A_h(x_0,r)=\frac{r^{-k}}{2k+m}\left((2k+m-\mu_{\ell})C_k(1)^{-1}\int_{-1}^1h(x_0,rt)C_k(t)\left(1-t^2\right)^{(m-3)/2}dt\right.\\
\left.+\mu_{\ell}\,C_{k+2}(1)^{-1}\int_{-1}^1h(x_0,rt)C_{k+2}(t)\left(1-t^2\right)^{(m-3)/2}dt\right),
\end{multline*}
\[B_h(x_0,r)=C_h(x_0,r)=-ir^{-k-1}C_{k+1}(1)^{-1}\int_{-1}^1h(x_0,rt)C_{k+1}(t)\left(1-t^2\right)^{(m-3)/2}dt,\]
\[D_h(x_0,r)=-r^{-k-2}C_{k+2}(1)^{-1}\int_{-1}^1h(x_0,rt)C_{k+2}(t)\left(1-t^2\right)^{(m-3)/2}dt.\]
\end{thm}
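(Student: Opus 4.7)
The plan is to proceed in two stages: first show that $I_h$ is two-sided monogenic by a plane-wave argument, then identify the axial decomposition and its coefficients via Funk--Hecke.

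For the monogenicity, since $h(x,y)$ is holomorphic the Cauchy--Riemann equations give $\partial_\theta h=i\,\partial_{x_0}h$ with $\theta=\langle\underline x,\underline t\rangle$, and a short calculation yields both $(\partial_{x_0}+\partial_{\underline x})h(x_0,\theta)=(1+i\underline t)\,\partial_{x_0}h$ and $h(x_0,\theta)(\partial_{x_0}+\partial_{\underline x})=\partial_{x_0}h\,(1+i\underline t)$ (the two expressions coincide because $h$ is scalar-valued). Differentiating under the integral sign and pulling the operator past the Clifford factor $K(\underline t):=(1-i\underline t)P_{k,\ell}(\underline t)(1-i\underline t)$, which is constant in $(x_0,\underline x)$, the left action on the integrand becomes a multiple of $(1+i\underline t)K(\underline t)$ and the right action a multiple of $K(\underline t)(1+i\underline t)$; both vanish because $(1+i\underline t)(1-i\underline t)=0$.

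Next, expanding
\[
(1-i\underline t)P_{k,\ell}(\underline t)(1-i\underline t)=P_{k,\ell}(\underline t)-i\,\underline tP_{k,\ell}(\underline t)-i\,P_{k,\ell}(\underline t)\underline t-\underline tP_{k,\ell}(\underline t)\underline t
\]
splits $I_h$ into four integrals. The first three integrands are (Clifford-valued) spherical harmonics of degrees $k$, $k+1$, $k+1$; the harmonicity of $\underline tP_{k,\ell}$ and $P_{k,\ell}\underline t$ is noted in the paragraph preceding the theorem. Writing $\underline x=r\underline\xi$ with $\underline\xi\in S^{m-1}$, applying Funk--Hecke componentwise, and restoring the homogeneity via $P_{k,\ell}(\underline\xi)=r^{-k}P_{k,\ell}(\underline x)$, $\underline\xi P_{k,\ell}(\underline\xi)=r^{-k-1}\underline xP_{k,\ell}(\underline x)$ and $P_{k,\ell}(\underline\xi)\underline\xi=r^{-k-1}P_{k,\ell}(\underline x)\underline x$, these three integrals produce, respectively, a $C_k$-term proportional to $P_{k,\ell}(\underline x)$ together with the full stated terms $B_h\,\underline xP_{k,\ell}(\underline x)$ and $C_h\,P_{k,\ell}(\underline x)\underline x$; the equality $B_h=C_h$ is automatic, since both arise from the $C_{k+1}$-integral with the common prefactor $-i$.

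For the remaining integrand $\underline tP_{k,\ell}(\underline t)\underline t$, which is not harmonic, I would invoke the Fischer decomposition (\ref{Fisch2term}) to rewrite it on $S^{m-1}$ (where $|\underline t|=1$) as the degree-$(k+2)$ spherical harmonic $\underline tP_{k,\ell}(\underline t)\underline t-\tfrac{\mu_\ell}{2k+m}P_{k,\ell}(\underline t)$ plus $\tfrac{\mu_\ell}{2k+m}P_{k,\ell}(\underline t)$. Funk--Hecke then produces a $C_{k+2}$-term in $r^{-k-2}\underline xP_{k,\ell}(\underline x)\underline x$---yielding the stated $D_h$---together with $C_k$- and $C_{k+2}$-corrections in the $P_{k,\ell}(\underline x)$-coefficient. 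Adding these to the $C_k$-contribution from the first integral and simplifying $1-\tfrac{\mu_\ell}{2k+m}=\tfrac{2k+m-\mu_\ell}{2k+m}$ gives the stated $A_h$. I expect the main obstacle to be precisely this final bookkeeping in $A_h$, where three separate $C_k$- and $C_{k+2}$-pieces (each with its own normalization $C_n(1)^{-1}$ and its own power of $r$) must collapse into a single tidy expression; the rest is a routine application of Funk--Hecke to known spherical harmonics together with the zero-divisor identity already exploited in the preamble to this section.
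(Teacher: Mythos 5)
Your proposal is correct and follows essentially the same route as the paper's proof: establish two-sided monogenicity of the plane-wave integrand via the zero-divisor identity, split $I_h$ into four Funk--Hecke integrals over $P_{k,\ell}$, $\underline tP_{k,\ell}$, $P_{k,\ell}\underline t$, $\underline tP_{k,\ell}\underline t$, handle the first three directly, and reduce the fourth with the Fischer decomposition \eqref{Fisch2term} before collecting coefficients. The only difference is that you spell out the right-monogenicity calculation $h(x_0,\theta)(\partial_{x_0}+\partial_{\underline x})=\partial_{x_0}h\,(1+i\underline t)$, which the paper leaves implicit.
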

\begin{proof}
It is clear that for any $\underline t\in S^{m-1}$ the function $h(x_0,\langle\underline x,\underline t\rangle)(1-i\underline t)P_{k,\ell}(\underline t)(1-i\underline t)$ is two-sided monogenic and hence so is the function $I_h(x_0,\underline x)$. We thus only need to show that it may be written as
\[I_h=A_hP_{k,\ell}+B_h\underline xP_{k,\ell}+C_hP_{k,\ell}\underline x+D_h\underline x P_{k,\ell}\underline x.\]
In order to perform this task we must compute integrals of the form
\[\frac{1}{\sigma_{m-1}}\int_{S^{m-1}}h(x_0,\langle\underline x,\underline t\rangle)F(\underline t)dS(\underline t),\]
where $F(\underline t)$ can be equal to $P_{k,\ell}(\underline t)$, $\underline tP_{k,\ell}(\underline t)$, $P_{k,\ell}(\underline t)\underline t$ or $\underline tP_{k,\ell}(\underline t)\underline t$. These integrals shall be denoted by $I_1$, $I_2$, $I_3$ and $I_4$. We then have that 
\[I_h=I_1-iI_2-iI_3-I_4.\]
The first three integrals may be computed directly by applying Funk-Hecke's formula since  $P_{k,\ell}(\underline t)$, $\underline tP_{k,\ell}(\underline t)$ and $P_{k,\ell}(\underline t)\underline t$ are harmonic polynomials. Indeed, writing $\underline x$ in polar coordinates, i.e. $\underline x=r\underline\omega$, we obtain
\begin{align*}
I_1&=P_{k,\ell}(\underline\omega)C_k(1)^{-1}\int_{-1}^1h(x_0,rt)C_k(t)\left(1-t^2\right)^{(m-3)/2}dt\\
I_2&=\underline\omega P_{k,\ell}(\underline\omega)C_{k+1}(1)^{-1}\int_{-1}^1h(x_0,rt)C_{k+1}(t)\left(1-t^2\right)^{(m-3)/2}dt,\\
I_3&=P_{k,\ell}(\underline\omega)\underline\omega C_{k+1}(1)^{-1}\int_{-1}^1h(x_0,rt)C_{k+1}(t)\left(1-t^2\right)^{(m-3)/2}dt.
\end{align*}
Finally, from (\ref{Fisch2term}) and using Funk-Hecke's formula we also get 
\begin{multline*}
I_4=\left(\underline\omega P_{k,\ell}(\underline\omega)\underline\omega-\frac{\mu_{\ell}}{2k+m}P_{k,\ell}(\underline\omega)\right)C_{k+2}(1)^{-1}\\
\times\int_{-1}^1h(x_0,rt)C_{k+2}(t)\left(1-t^2\right)^{(m-3)/2}dt\\
+\frac{\mu_{\ell}}{2k+m}P_{k,\ell}(\underline\omega)C_k(1)^{-1}\int_{-1}^1h(x_0,rt)C_k(t)\left(1-t^2\right)^{(m-3)/2}dt,
\end{multline*}
which completes the proof.
\end{proof}
\noindent
In the next examples we compute $I_h$ for the cases $h(x,y)=e^{x+iy}$ and $h(x,y)=(x+iy)^n$.\vspace{0.2cm} 

\noindent
{\bf Example 1.} An axial two-sided monogenic function of exponential type was obtained in \cite{DSo} by assuming the existence of a solution of (\ref{Veq2sided}) of the form 
\[A(x_0,r)=e^{x_0}a(r),\;B(x_0,r)=e^{x_0}b(r),\;D(x_0,r)=e^{x_0}d(r).\]
This assumption led to an ordinary differential equation of second order for $b(r)$ which could be solved by means of the Bessel function of the first kind $J_{k+m/2}(r)$, namely
\begin{equation*}
b(r)=r^{-k-\frac{m}{2}}J_{k+\frac{m}{2}}(r).
\end{equation*}
From this it easily follows that 
\begin{align*}
d(r)&=r^{-k-\frac{m}{2}-1}J_{k+\frac{m}{2}+1}(r),\\
a(r)&=\big(2k+m-\mu_{\ell}\big)b(r)-r^2d(r).
\end{align*}
We will now show that this particular solution of system (\ref{Veq2sided}) can be derived from Theorem \ref{PlaWavMeth} by assuming $h(x,y)=e^{x+iy}$. In order to do this we shall use the following equalities
\begin{equation*}
C_k^\nu(1)=\frac{\Gamma(2\nu+k)}{k!\,\Gamma(2\nu)},\quad\Gamma\left(\frac{n}{2}\right)=\sqrt{\pi}\frac{(n-2)!!}{2^{(n-1)/2}},
\end{equation*}
\[\int_{-1}^1e^{iat}C_k^\nu(t)\left(1-t^2\right)^{\nu-1/2}dt=\frac{\pi\,2^{1-\nu}i^k\Gamma(2\nu+k)}{k!\,\Gamma(\nu)}a^{-\nu}J_{k+\nu}(a),\]
where $\Gamma$ denotes the Gamma function and  $n!!$ the double factorial of $n$ (see e.g. \cite{GraRy}). It follows that 
\begin{multline*}
r^{-k}C_k(1)^{-1}\int_{-1}^1e^{irt}C_k(t)\left(1-t^2\right)^{(m-3)/2}dt\\
=\sqrt{2\pi}\,(m-3)!!\,i^kr^{-(k+m/2-1)}J_{k+m/2-1}(r),
\end{multline*}
from which we immediately get
\[B_h(x_0,r)=\sqrt{2\pi}\,(m-3)!!\,i^ke^{x_0}b(r),\quad D_h(x_0,r)=\sqrt{2\pi}\,(m-3)!!\,i^ke^{x_0}d(r).\]
For computing $A_h(x_0,r)$ we also need the recurrence relation  
\[\frac{2\nu}{r}J_\nu(r)=J_{\nu-1}(r)+J_{\nu+1}(r)\]
to obtain $A_h(x_0,r)=\sqrt{2\pi}\,(m-3)!!\,i^ke^{x_0}a(r)$. Therefore
\begin{multline*}
I_h(x_0,\underline x)=\sqrt{2\pi}\,(m-3)!!\,i^ke^{x_0}\Big(a(r)P_{k,\ell}(\underline x)+b(r)\underline xP_{k,\ell}(\underline x)\\
+b(r)P_{k,\ell}(\underline x)\underline x+d(r)\underline x P_{k,\ell}(\underline x)\underline x\Big)
\end{multline*}
for $h(x,y)=e^{x+iy}$.\vspace{0.2cm} 

\noindent
{\bf Example 2.} Other two interesting choices of $h$ are provided by the holomorphic functions 
\[h(x,y)=(x+iy)^{k+2n},\quad h(x,y)=(x+iy)^{k+2n+1}\]   
because they yield the basic axial two-sided monogenic polynomials (\ref{buildblocks2sided}). 

Let us first consider the case $h(x,y)=(x+iy)^{k+2n}$. Note that for this case $h(0,rt)C_{k+1}(t)$ is odd as a function of $t$ and therefore $B_h(0,r)=C_h(0,r)=0$. For the computation of $A_h(0,r)$ and $D_h(0,r)$ we use the following identity   
\[\int_{0}^1t^{k+2\rho}C_k^\nu(t)\left(1-t^2\right)^{\nu-1/2}dt=\frac{\Gamma(2\nu+k)\Gamma(2\rho+k+1)\Gamma\left(\nu+\frac{1}{2}\right)\Gamma\left(\rho+\frac{1}{2}\right)}{2^{k+1}\Gamma(2\nu)\Gamma(2\rho+1)\,k!\,\Gamma(k+\nu+\rho+1)}\]
and we can conclude that
\begin{multline*}
I_h(x_0,\underline x)=\frac{(-1)^{n+1}\sqrt{2\pi}\,(k+2n)!(m-3)!!\,i^k}{(2n-2)!!(2k+2n+m)!!}\\
\times\mathsf{CK}\big[\alpha_{n,\ell}\vert\underline x\vert^{2n}P_{k,\ell}(\underline x)+\vert\underline x\vert^{2n-2}\underline xP_{k,\ell}(\underline x)\underline x\big](x_0,\underline x).
\end{multline*}
A similar analysis can be made for the case $h(x,y)=(x+iy)^{k+2n+1}$ to obtain 
\begin{multline*}
I_h(x_0,\underline x)=\frac{(-1)^{n}\sqrt{2\pi}\,(k+2n+1)!(m-3)!!\,i^k}{(2n)!!(2k+2n+m)!!}\\
\times\mathsf{CK}\big[\vert\underline x\vert^{2n}\left(\underline xP_{k,\ell}(\underline x)+P_{k,\ell}(\underline x)\underline x\right)\big](x_0,\underline x).
\end{multline*}

\begin{rem}
In view of Corollary \ref{CKxPx} it does follow that every two-sided monogenic homogeneous polynomial in $\mathbb R^{m+1}$ can always be written as a finite sum of functions $I_h$ where $h(x,y)=(x+iy)^{k+2n}$ or $h(x,y)=(x+iy)^{k+2n+1}$.
\end{rem}

\section{A characterization in terms of derivatives of axial left monogenic functions}\label{secc4}

Proposition \ref{caract1} gives a characterization of the axial two-sided monogenic functions. The goal in this section is to offer an alternative description by  showing the connection between these functions and the axial left monogenic functions. 

Suppose that $P_{k,\ell}$ is an $\mathbb R_{0,m}^{(\ell)}$-valued polynomial belonging to $\mathsf{M}_{l}(k)$. If 
\[\left(M(x_0,r)+\displaystyle{\frac{\underline x}{r}}\,N(x_0,r)\right)P_{k,\ell}(\underline x)\] 
is axial left monogenic, then it is clear that 
\[\left[\left(M(x_0,r)+\frac{\underline x}{r}\,N(x_0,r)\right)P_{k,\ell}(\underline x)\right](\partial_{x_0}-\partial_{\underline x})\]
is also right monogenic. This function is moreover of the form (\ref{AxialTSMF}) with
\begin{equation}\label{rleft2sided}
A=\partial_{x_0}M-\mu_{\ell}\frac{N}{r},\quad B=\frac{\partial_{x_0}N}{r},\quad C=-\frac{\partial_{r}M}{r},\quad D=-\frac{\partial_{r}\left(N/r\right)}{r}
\end{equation}
and hence is axial two-sided monogenic. Observe that $B=C$, which follows from the second equation of (\ref{VeEq}).

It is natural to ask whether every axial two-sided monogenic function can be obtained in this way. 

\begin{thm}
Let $F=AP_{k,\ell}+B\underline xP_{k,\ell}+CP_{k,\ell}\underline x+D\underline x P_{k,\ell}\underline x$ be an axial two-sided monogenic function defined in an open neighbourhood of
\[\Omega=\left\{(x_0,\underline x)\in\mathbb R^{m+1}:\;(x_0,r)\in [a_1,b_1]\times[a_2,b_2]\subset\mathbb R^2,\;a_2>0\right\}.\]
There exists an axial left monogenic function $\left(M+\displaystyle{\frac{\underline x}{r}}\,N\right)P_{k,\ell}$ such that
\[F(x_0,\underline x)-\left[\left(M(x_0,r)+\frac{\underline x}{r}\,N(x_0,r)\right)P_{k,\ell}(\underline x)\right](\partial_{x_0}-\partial_{\underline x})=cP_{k,\ell}(\underline x),\]
where $c$ is a real constant.
\end{thm}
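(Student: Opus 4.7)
My plan is to construct the axial left monogenic function explicitly by solving, on a simply connected open neighbourhood of $[a_1,b_1]\times[a_2,b_2]\subset\mathbb{R}^2_+$, the pair of first-order systems that $M$ and $N$ must obey. Setting $P=N/r$, formula (\ref{rleft2sided}) writes the components of $[(M+(\underline x/r)N)P_{k,\ell}](\partial_{x_0}-\partial_{\underline x})$ as $\tilde A=\partial_{x_0}M-\mu_{\ell}P$, $\tilde B=\tilde C=\partial_{x_0}P$ and $\tilde D=-(\partial_r P)/r$. So I must find $M$ and $P$ making $\tilde B=B$, $\tilde D=D$ and $A-\tilde A$ a real constant.

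First I would solve the overdetermined system $\partial_{x_0}P=B$, $\partial_r P=-rD$ for $P$. Its cross-compatibility $\partial_r B=-r\partial_{x_0}D$ is exactly the fourth equation of (\ref{Veq2sided}), so $P$ exists on the simply connected domain, unique up to an additive constant. Setting $N=rP$, I would then solve $\partial_r M=-rB$ and $\partial_{x_0}M=(2k+m)P-r^2D$ for $M$: with $N=rP$ these are precisely the axial left monogenic Vekua system (\ref{VeEq}), so the resulting function $(M+(\underline x/r)N)P_{k,\ell}$ is automatically axial left monogenic. The cross-compatibility of the $M$-system, after substituting $\partial_r P=-rD$, collapses to $\partial_{x_0}B-r\partial_r D=(2k+m+2)D$, which is the third equation of (\ref{Veq2sided}); so $M$ also exists, again up to a constant.

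Finally I would check that $A-\tilde A$ is constant by showing both partial derivatives vanish. Commuting derivatives, $\partial_r \tilde A=\partial_{x_0}\partial_r M-\mu_{\ell}\partial_r P=-r\partial_{x_0}B+\mu_{\ell}rD$, which matches $\partial_r A$ by the second equation of (\ref{Veq2sided}). Likewise $\partial_{x_0}\tilde A=\partial_{x_0}^2 M-\mu_{\ell}B$, and the fourth equation of (\ref{Veq2sided}) lets me rewrite $\partial_{x_0}^2 M=r\partial_r B+(2k+m)B$, yielding $\partial_{x_0}\tilde A=r\partial_r B+(2k+m-\mu_{\ell})B$, which equals $\partial_{x_0}A$ by the first equation of (\ref{Veq2sided}). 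Hence $A-\tilde A=c\in\mathbb{R}$, and the difference in the statement is $cP_{k,\ell}$. The main obstacle is purely computational bookkeeping: four compatibility checks, each of which is forced to coincide with one of the four equations of (\ref{Veq2sided}), must be matched in the correct order, but no analytic input beyond simple connectedness of the rectangle is required.
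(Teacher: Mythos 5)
Your proof is correct and uses essentially the same method as the paper: invert the relations \eqref{rleft2sided} by primitivation, with the equations of \eqref{Veq2sided} serving as the compatibility conditions guaranteeing existence of $M$ and $N=rP$, and then show the leftover term $A-\tilde A$ is constant. The paper organizes the construction slightly differently (explicit $r$-integrals with free functions $\alpha(x_0),\beta(x_0)$ whose Vekua constraints become ODEs, and concluding $c$ is constant by monogenicity of the difference rather than by computing $\partial_r$ and $\partial_{x_0}$ of $A-\tilde A$ directly), but the mathematical content is the same.
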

\begin{proof}
On account of (\ref{rleft2sided}) we need to find solutions $M$, $N$ to the system 
\begin{align*}
\partial_{r}M&=-rB\\
\partial_{r}\left(N/r\right)&=-rD
\end{align*}
that satisfy the Vekua system (\ref{VeEq}). Thus we have
\[M(x_0,r)=-\int_{a_2}^rtB(x_0,t)dt+\alpha(x_0),\]
\[N(x_0,r)=r\left(-\int_{a_2}^rtD(x_0,t)dt+\beta(x_0)\right).\]
Using the last two equations of (\ref{Veq2sided}) we obtain
\begin{align*}
\partial_{x_0}M=-\int_{a_2}^r\left(t^2\partial_{t}D(x_0,t)+(2k+m+2)tD(x_0,t)\right)dt+\alpha^{\prime}(x_0)\\
=-(2k+m)\int_{a_2}^rtD(x_0,t)dt-\big(t^2D(x_0,t)\big)\big\vert_{t=a_2}^{t=r}+\alpha^{\prime}(x_0),
\end{align*}
\[\partial_{x_0}N=r\left(\int_{a_2}^r\partial_{t}B(x_0,t)dt+\beta^{\prime}(x_0)\right)=r\left(B(x_0,t)\big\vert_{t=a_2}^{t=r}+\beta^{\prime}(x_0)\right).\]
Hence
\[\partial_{x_0}M-\partial_{r}N=\frac{2k+m-1}{r}N+\alpha^{\prime}(x_0)-(2k+m)\beta(x_0)+a_2^2D(x_0,a_2)\]
and 
\[\partial_rM+\partial_{x_0}N=r\left(\beta^{\prime}(x_0)-B(x_0,a_2)\right).\]
Therefore, $M$ and $N$ satisfy the Vekua system (\ref{VeEq}) if and only if 
\begin{align*}
\alpha^{\prime}(x_0)-(2k+m)\beta(x_0)&=-a_2^2D(x_0,a_2)\\
\beta^{\prime}(x_0)&=B(x_0,a_2).
\end{align*}
Thus, it is possible to find an axial left monogenic function $\left(M+\displaystyle{\frac{\underline x}{r}}\,N\right)P_{k,\ell}$ such that
\[F(x_0,\underline x)-\left[\left(M(x_0,r)+\frac{\underline x}{r}\,N(x_0,r)\right)P_{k,\ell}(\underline x)\right](\partial_{x_0}-\partial_{\underline x})=c(x_0,r)P_{k,\ell}(\underline x),\]
where $c(x_0,r)$ is an $\mathbb R$-valued function. The monogenicity of the left-hand side of the last equality implies that function $c(x_0,r)$ is a constant.  
\end{proof}

\subsection*{Acknowledgments}

D. Pe\~na Pe\~na acknowledges the support of a Postdoctoral Fellowship given by Istituto Nazionale di Alta Matematica (INdAM) and cofunded by Marie Curie actions.

\end{document}